\renewcommand{\vocab}{\emph}
\newcommand{\sfr}{\mathsf{r}}
\newcommand{\calk}{\mathcal{K}}
\newcommand{\cals}{\mathcal{S}}
\newcommand{\calw}{\mathcal{W}}
\newcommand{\thead}[2]{\multicolumn{1}{#1}{#2}}
\title{Sub-Fibonacci behavior in numerical semigroup enumeration}
\author{Daniel G.\ Zhu}
\address{Department of Mathematics, Massachusetts Institute of Technology, Cambridge, MA 02139}
\email{zhd@princeton.edu}
\begin{document}
\begin{abstract}
In 2013, Zhai proved that most numerical semigroups of a given genus have depth at most $3$ and that the number $n_g$ of numerical semigroups of a genus $g$ is asymptotic to $S\varphi^g$, where $S$ is some positive constant and $\varphi \approx 1.61803$ is the golden ratio. In this paper, we prove exponential upper and lower bounds on the factors that cause $n_g$ to deviate from a perfect exponential, including the number of semigroups with depth at least $4$. Among other applications, these results imply the sharpest known asymptotic bounds on $n_g$ and shed light on a conjecture by Bras-Amor\'os (2008) that $n_g \geq n_{g-1} + n_{g-2}$. Our main tools are the use of Kunz coordinates, introduced by Kunz (1987), and a result by Zhao (2011) bounding weighted graph homomorphisms.
\end{abstract}
\maketitle

\section{Introduction} \label{sec:intro}
A \vocab{numerical semigroup} $\Lambda$ is an additive submonoid of the nonnegative integers $\setn_0$ with finite complement $\setn_0\setminus \Lambda$. The \vocab{genus} $g(\Lambda)$ of a numerical semigroup is defined to be the size of $\setn_0 \setminus \Lambda$.

Over the past 15 years, significant progress has been made in understanding the number $n_g$ of numerical semigroups with genus $g$. Perhaps most significantly, in 2013 Zhai proved the following theorem, resolving two conjectures of Bras-Amorós \cite{BrasAmoros2008}:
\begin{thm}[Zhai \cite{Zhai2013}] \label{thm:zhaiphi}
We have $n_g \sim S\varphi^g$, where $S$ is some positive constant and $\varphi = \frac{1+\sqrt{5}}{2} \approx 1.61803$ is the golden ratio.
\end{thm}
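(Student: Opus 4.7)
The plan is to stratify numerical semigroups by their \emph{depth} $q(\Lambda) = \lceil F(\Lambda)/m(\Lambda)\rceil$, where $F(\Lambda)$ is the Frobenius number (largest gap) and $m(\Lambda)$ is the multiplicity (smallest nonzero element). I expect that semigroups of depth at most $3$ account for essentially all of $n_g$, and within that stratum one can extract an explicit $\varphi^g$ asymptotic.

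First I would handle depth $\leq 2$ by hand. Such a semigroup satisfies $F < 2m$, so the sum of any two nonzero elements already exceeds $F$ and imposes no constraint; any subset of $\{m+1,\dots,2m-1\}$ may be declared to be the gaps above $m$. This gives $2^{m-1}$ semigroups per multiplicity $m$ with genus in $[m-1,2m-2]$, and the bivariate generating function collapses to $1/(1-x-x^2)$, whose coefficients are the Fibonacci numbers. So the depth-$\leq 2$ count equals $F_{g+1}\sim\varphi^{g+1}/\sqrt{5}$ exactly, already establishing the correct exponential rate.

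Next I would extend to depth $3$ via Kunz coordinates: a multiplicity-$m$ semigroup corresponds to a tuple $(k_1,\dots,k_{m-1})$ of positive integers satisfying Kunz-type inequalities $k_i+k_j\geq k_{i+j\bmod m}$ (for $i+j\not\equiv 0\pmod m$), with $g=\sum k_i$ and $q=\max k_i$. Restricting to $k_i\in\{1,2,3\}$ and applying a transfer-matrix or singularity analysis should yield another $\Theta(\varphi^g)$ contribution converging to a computable constant multiple of $\varphi^g$, so that the combined depth-$\leq 3$ total is $\sim S\varphi^g$ for an explicit $S>0$. For the tail of $q\geq 4$ semigroups, I would follow the abstract's hint and apply Zhao's weighted graph homomorphism bound: Kunz tuples reinterpret as homomorphisms into a suitably weighted host graph, and forcing some coordinate to be $\geq 4$ should yield an exponential growth rate strictly below $\varphi$, making this contribution $o(\varphi^g)$.

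The delicate step is producing the genuine limit $\lim n_g/\varphi^g = S$ rather than merely matching exponential bounds. The rate $\varphi$ is forced by the depth-$\leq 2$ calculation, but extracting the limiting constant requires convergence of the depth-$3$ count once the cap $k_i\leq 3$ is imposed, and this depends on joint statistics across all coordinates of the Kunz tuple rather than a single clean univariate recursion. Controlling the resulting error terms uniformly in $m$ and then summing cleanly over $m$ is where the bulk of the technical work would lie, and is the main obstacle to upgrading $\Theta(\varphi^g)$ to $\sim S\varphi^g$.
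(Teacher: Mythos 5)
Your high-level strategy---stratify by depth, count depth~$\leq 2$ exactly as Fibonacci, control depth~$3$ via Kunz coordinates, and kill depth~$\geq 4$ with Zhao's weighted-homomorphism bound---is the same skeleton as the paper's alternate proof (via \cref{thm:ngasymp}). Your depth-$\leq 2$ computation is correct: a semigroup with $F<2m$ is exactly a free choice of gap set inside $\{m+1,\dots,2m-1\}$, and the bivariate generating function $\sum_{j\geq 0}(x(1+x))^j = 1/(1-x-x^2)$ gives $F_{g+1}$, matching \cref{prop:kunzprops} and the $2$-Kunz discussion.

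The genuine gap is your treatment of depth exactly $3$. A transfer-matrix or local singularity analysis cannot be applied to $3$-Kunz words as such, because the Kunz constraints $w_i+w_j\geq w_{i+j}$ couple \emph{arbitrarily distant} coordinates; there is no bounded window. The paper's crucial structural observation, which you are missing, is \cref{prop:3kdecomp}: the Kunz inequality with right-hand side $3$ only constrains indices $\leq j$ where $j$ is the last position of a $3$, so every $3$-Kunz word factors \emph{uniquely} as a concatenation of a ``stressed'' prefix (a $3$-Kunz word ending in $3$) and an unconstrained $2$-Kunz suffix. This gives the exact convolution $t_g = F_{g+1} + \sum_{i=1}^g s_i F_{g+1-i}$ (\cref{cor:tgdecomp}), from which $t_g \sim \tfrac{\varphi}{\sqrt{5}}\bigl(1+\sum_i s_i\varphi^{-i}\bigr)\varphi^g$ follows \emph{provided} $\limsup s_g^{1/g}<\varphi$. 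That last inequality is not free: it is precisely the content of \cref{thm:mainbound}, whose proof is the Zhao/Kunz machinery you cite for the $q\geq 4$ tail. So the Zhao argument is not merely there to discard depth~$\geq 4$; it is also what makes the depth-$3$ constant converge. You correctly flag that ``extracting the limiting constant'' is the delicate step, but the missing idea---the stressed-word factorization reducing the nonlocal depth-$3$ count to a Fibonacci convolution---is what resolves it, and without it the proposal does not close.
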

However, the following conjecture remains unresolved:
\begin{conj}[Bras-Amorós \cite{BrasAmoros2008}] \label{conj:bafib}
For $g \geq 2$, $n_g \geq n_{g-1} + n_{g-2}$.
\end{conj}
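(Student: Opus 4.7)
The plan is to transfer Conjecture~\ref{conj:bafib} into a statement about error terms. By Theorem~\ref{thm:zhaiphi}, writing $n_g = S\varphi^g + E_g$ with $E_g = o(\varphi^g)$ and using the identity $\varphi^g = \varphi^{g-1}+\varphi^{g-2}$, the conjecture becomes the assertion that $E_g \ge E_{g-1}+E_{g-2}$ for every $g \ge 2$. Thus one needs not just exponential control on $|E_g|$, but enough sign information to isolate the contributions to $E_g$ with the ``right'' sign.

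First, I would stratify $n_g$ by depth, writing $n_g = n_g^{\le 3} + n_g^{\ge 4}$. Zhai's analysis underlying Theorem~\ref{thm:zhaiphi} already gives rather precise combinatorial access to $n_g^{\le 3}$: in particular, one should be able to evaluate the Fibonacci defect $n_g^{\le 3}-n_{g-1}^{\le 3}-n_{g-2}^{\le 3}$ and show it is positive of order $\psi_1^g$ for some $\psi_1<\varphi$, the surplus coming from depth-$3$ semigroups near the boundary of the depth-$2$ region.

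Second, I would bound $n_g^{\ge 4}$ from above via Kunz coordinates: fixing the multiplicity $m$, each such semigroup corresponds to an integer point in a rational polytope whose defining inequalities encode a graph-like constraint system, and the enumeration factors as a partition function that Zhao's weighted homomorphism inequality controls by a product over edges. This should give $n_g^{\ge 4} = O(\psi_2^g)$ for some $\psi_2<\varphi$, together with a matching lower bound of the same order from an explicit family of depth-$4$ constructions. These two bounds already deliver the sub-Fibonacci behavior advertised in the title.

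The main obstacle is reconciling the two sectors so that the positive defect from depth~$\le 3$ strictly dominates the possibly negatively-signed defect from depth~$\ge 4$. Plain upper and lower bounds on $|E_g|$ do not suffice, because $E_g - E_{g-1} - E_{g-2}$ could oscillate in sign; one would need either $\psi_2$ strictly less than $\psi_1$ with explicit enough constants, or a combinatorial injection sending pairs of semigroups of genera $g-1$ and $g-2$ into genus-$g$ semigroups in a depth-respecting way. I expect the estimates in this paper to settle Conjecture~\ref{conj:bafib} for all sufficiently large $g$ and to reduce the remaining cases to a finite computation, but a clean unconditional proof is likely to remain out of reach, consistent with the abstract's phrasing of ``shedding light on'' rather than resolving the conjecture.
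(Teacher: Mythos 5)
The paper does not prove \cref{conj:bafib}; it is posed as an open conjecture in \cref{sec:intro} and revisited at the end of \cref{sec:program}, where the author explains precisely why the bounds obtained in the paper do not resolve it. You correctly sense this, and your reduction is sound: since $n_g = t_g + \hat n_g$ and $s_g = t_g - t_{g-1} - t_{g-2}$, the conjecture is equivalent to $s_g + \hat n_g - \hat n_{g-1} - \hat n_{g-2} \geq 0$, matching your splitting into a nonnegative depth-$\le 3$ surplus and an unsigned depth-$\ge 4$ term (and your $E_g$-formulation, since $\varphi^g-\varphi^{g-1}-\varphi^{g-2}=0$).

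However, two of your stated expectations run against what the paper actually establishes. First, your hope that the depth-$\ge 4$ defect grows at a strictly smaller rate $\psi_2 < \psi_1$ than the depth-$\le 3$ surplus is ruled out by \cref{thm:mainbound}, which proves $r_1 \le r_2$ (where $r_1$, $r_2$ govern $s_g$ and $\hat n_g$ respectively) and conjectures $r_1 = r_2 = \sfr_{1.51}$: the two sectors have comparable exponential orders, so no rate separation is available. Second, your expectation that the paper's estimates ``settle \cref{conj:bafib} for all sufficiently large $g$'' overshoots: the paper explicitly notes that the bound $\abs{s_g + \hat n_g - \hat n_{g-1} - \hat n_{g-2}} < (\sfr_{1.54} + o(1))^g$ does not determine the sign. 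The paper's actual heuristic is finer than rate separation: assuming \cref{conj:growthconj}(b), the sign for large $g$ is governed by the limit $A = \lim_{g\to\infty} \hat n_g/s_g$ and its comparison with the threshold $(\sfr_{1.51}^{-1} + \sfr_{1.51}^{-2} - 1)^{-1} \approx 10.465$. The numerical evidence places $A$ uncomfortably close to this threshold, so even the conditional large-$g$ verdict remains genuinely open, and the paper singles out $A$ equaling the threshold exactly as the most intriguing possibility.
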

In fact, it is not even known whether $n_g \geq n_{g-1}$ holds for all $g \geq 1$; while \cref{thm:zhaiphi} implies that $n_g \geq n_{g-1}$ for sufficiently large $g$, the bounds involved do not make it feasible to manually verify $n_g \geq n_{g-1}$ for small $g$.

Key properties of numerical semigroups also include the \vocab{multiplicity} $m(\Lambda) = \min(\Lambda \setminus \set{0})$, \vocab{conductor} $c(\Lambda) = \min \setmid{c \in \setn_0}{c + \setn_0 \subseteq \Lambda}$, and \vocab{Frobenius number} $f(\Lambda) = c(\Lambda) - 1$. Recently, Eliahou and Fromentin \cite{Eliahou2020} also introduced the \vocab{depth} $q(\Lambda) = \ceil{c(\Lambda)/m(\Lambda)}$. (From now on, we will omit the argument $\Lambda$ if it is clear from context.) For example, the numerical semigroup $\setn_0 \setminus \set{1, 2, 3, 4, 5, 7, 9, 10, 13}$ has genus $9$, multiplicity $6$, conductor $14$, Frobenius number $13$, and depth $3$.

Let $t_g$ be the number of numerical semigroups of genus $g$ satisfying $q \leq 3$ and let $\hat n_g = n_g - t_g$ be the number of numerical semigroups of genus $g$ with $q > 3$. This split is useful for enumerative purposes since ``most'' numerical semigroups satisfy $q \leq 3$, as proven by Zhai:
\begin{thm}[Zhai \cite{Zhai2013}] \label{thm:zhaimostthree}
We have $\lim_{g\to\infty} \frac{t_g}{n_g} = 1$. In particular, $t_g \sim S\varphi^g$.
\end{thm}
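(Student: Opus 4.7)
The plan is to deduce the conclusion from an exponential upper bound of the form $\hat n_g = O(\beta^g)$ with $\beta < \varphi$. Combined with the lower bound $n_g \geq c \varphi^g$ (valid for some $c > 0$ and sufficiently large $g$) furnished by \cref{thm:zhaiphi}, this gives $\hat n_g / n_g \to 0$, and hence $t_g / n_g \to 1$ and $t_g \sim n_g \sim S \varphi^g$.

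To bound $\hat n_g$, I encode each numerical semigroup $\Lambda$ of multiplicity $m$ by its Kunz coordinates: letting $w_i$ be the smallest element of $\Lambda$ congruent to $i$ modulo $m$, write $w_i = k_i m + i$ for $i \in \{1, \ldots, m-1\}$, yielding a vector $(k_1, \ldots, k_{m-1}) \in \setn^{m-1}$ with each $k_i \geq 1$. A direct check shows that such a vector comes from a numerical semigroup if and only if $k_i + k_j \geq k_{i+j}$ whenever $i + j < m$ and $k_i + k_j + 1 \geq k_{i+j-m}$ whenever $i + j > m$. Moreover $g(\Lambda) = \sum_i k_i$ and the depth condition $q \leq 3$ is equivalent to $\max_i k_i \leq 3$. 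Thus $\hat n_g$ equals the number of valid Kunz vectors, summed over all multiplicities $m$, with $\sum_i k_i = g$ and $\max_i k_i \geq 4$.

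I then reinterpret the Kunz inequality system as the defining constraints of a weighted graph homomorphism: the vector $(k_1, \ldots, k_{m-1})$ amounts to a labeling of the vertices of the Cayley graph of $\setz/m\setz$ by nonnegative integers, subject to local inequalities indexed by pairs of generators. Applying Zhao's bound on weighted graph homomorphisms, which is especially tight for Cayley-type structures, should yield for each $m$ an exponential bound of the form $C_m \beta_m^g$ with $\beta_m$ uniformly bounded away from $\varphi$. Summing over $m$ — noting that $m > g + 1$ forces the trivial semigroup $\{0\} \cup (m + \setn_0)$ and contributes only a polynomial in $g$ — completes the desired bound on $\hat n_g$.

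The main obstacle is calibrating the graph homomorphism formulation so that Zhao's theorem produces an exponential rate strictly below $\varphi$. A naive enumeration — union-bounding over which coordinate is $\geq 4$ and over configurations of the rest — struggles to beat $\varphi^g$, because the Kunz inequalities are often permissive enough to allow a few large coordinates to coexist with many small ones. Securing the gap $\beta < \varphi$ requires exploiting the compatibility constraints holistically, which is precisely the regime where Zhao's bound is sharp. A secondary bookkeeping difficulty is obtaining uniform control of $\beta_m$ for multiplicities $m$ comparable to $g$, where the encoding is high-dimensional but most coordinates are forced to be small.
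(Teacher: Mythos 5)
Your high-level reduction is sound and matches the paper's: \cref{thm:zhaimostthree} follows once one has an exponential bound $\hat n_g = O(\beta^g)$ with $\beta < \varphi$, since then $\hat n_g / n_g \to 0$ using $n_g \sim S\varphi^g$. The paper obtains such a bound with $\beta = \sfr_{1.54} \approx 1.549$ in \cref{thm:mainbound}, and (unlike you) derives $n_g \sim S\varphi^g$ independently from that same bound via \cref{thm:ngasymp} rather than citing \cref{thm:zhaiphi}, so as to re-prove Zhai's theorems from scratch; but invoking \cref{thm:zhaiphi} for the lower bound on $n_g$, as you do, is legitimate if your only goal is \cref{thm:zhaimostthree}.

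The genuine gap is in the bound on $\hat n_g$, which is where you wave your hands. You propose to ``reinterpret the Kunz inequality system as the defining constraints of a weighted graph homomorphism'' and apply Zhao's theorem, but the Kunz inequalities $k_i + k_j \geq k_{i+j}$ are \emph{not} threshold constraints: the right-hand side is itself a variable coordinate $k_{i+j}$, not a fixed real number $t$, so the target graph is not a threshold graph and \cref{thm:zhao} does not apply as stated. Manufacturing an actual threshold instance is the whole technical content of \cref{sec:upper}. The paper introduces the \emph{dense depth} $q'$ (the largest value attained by at least $k^2$ coordinates), uses a pigeonhole argument to locate a window $(d - \ell/k, d]$ in which at least $k$ coordinates equal $q'$, and then \emph{standardizes} the word so that the only surviving constraints take the genuine threshold form $k_i + k_j \geq q'$ (for pairs summing into the window) and $k_i + k_j \geq 3$ (for pairs summing to the peak position $p$); only then can \cref{cor:zhaocor} be invoked. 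Moreover, very large depths ($q \geq 9$) require an entirely separate and more elementary argument (Case I), exploiting that a single huge coordinate forces the paired coordinates to be large and hence the total weight to be tiny, with no Zhao at all. Without the dense-depth/pigeonhole/standardization scaffolding and the separate large-$q$ case, the route you name produces no threshold graph and hence no bound, which is exactly the obstacle you yourself flag at the end but do not resolve.
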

Recently, Eliahou and Fromentin proved the following bounds:
\begin{thm}[Eliahou-Fromentin \cite{Eliahou2020}] \label{thm:efbound}
For $g \geq 3$, $t_{g-1} + t_{g-2} \leq t_g \leq t_{g-1} + t_{g-2} + t_{g-3}$.
\end{thm}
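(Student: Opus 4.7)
The plan is to parameterize depth-$\leq 3$ semigroups combinatorially via Kunz coordinates and derive a recursive identity. For $\Lambda$ of multiplicity $m$, let $w_i$ be the smallest element of $\Lambda$ congruent to $i \pmod{m}$ and set $k_i = (w_i - i)/m$ for $i \in \{1, \ldots, m-1\}$. A direct computation shows $q(\Lambda) \leq 3$ iff $k_i \leq 3$ for all $i$. The key first step is to verify that such $\Lambda$ are in bijection with partitions $\{1, \ldots, m-1\} = A \sqcup B \sqcup C$ (where $i \in A$, $B$, or $C$ according as $k_i$ equals $1$, $2$, or $3$) satisfying the single constraint
$$(A + A) \cap C = \emptyset,$$
where $A + A = \{a + a' : a, a' \in A\}$. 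All other Kunz semigroup inequalities hold automatically in this regime: the constraint $k_l \leq k_i + k_j$ (in the case $i + j = l < m$) is automatic unless $k_i = k_j = 1$, and the constraint $k_l \leq k_i + k_j + 1$ (in the case $i + j > m$) is automatic since $k_i + k_j + 1 \geq 3$. The genus of $\Lambda$ is then $|A| + 2|B| + 3|C|$.

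The second step is to define three \emph{extension} operations that send each partition $(A, B, C)$ of $\{1, \ldots, m-1\}$ to a partition of $\{1, \ldots, m\}$ representing a depth-$\leq 3$ semigroup of multiplicity $m+1$: insert the new index $m$ into $A$, into $B$, or into $C$, thereby increasing the genus by $1$, $2$, or $3$ respectively. The first two are always valid (no new pairwise sum in $A + A$ can collide with $C \subseteq \{1, \ldots, m-1\}$, since any sum involving $m$ exceeds $m$, and the second operation does not alter $A$), while the third is valid iff $m \notin A + A$. Since every partition of $\{1, \ldots, m\}$ places $m$ in exactly one of the three parts, these constructions collectively biject with the depth-$\leq 3$ semigroups of multiplicity $m + 1 \geq 2$.

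Summing over $m \geq 1$ yields
$$t_g = t_{g-1} + t_{g-2} + c_{g-3},$$
where $c_{g-3}$ counts depth-$\leq 3$ semigroups of genus $g - 3$ satisfying $m \notin A + A$ (with $m$ denoting the multiplicity). Since $0 \leq c_{g-3} \leq t_{g-3}$, both bounds claimed in the theorem follow immediately. The main technical obstacle is the Kunz-coordinate verification in the first step, namely tracking through the Kunz semigroup inequalities to confirm that only the single displayed condition survives; everything afterward is routine bookkeeping.
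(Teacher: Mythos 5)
Your proof is correct and is essentially the same argument as the paper's: the paper classifies $3$-Kunz words by their last letter (which encodes whether the largest index of the Kunz tuple lies in your $A$, $B$, or $C$), shows that deleting/appending a $1$ or $2$ always preserves the $3$-Kunz condition while appending a $3$ may not, and so deduces exactly your identity $t_g = t_{g-1} + t_{g-2} + c_{g-3}$ with $0 \le c_{g-3} \le t_{g-3}$. The only difference is cosmetic: you phrase the data as a partition $(A,B,C)$ with the constraint $(A+A)\cap C=\emptyset$, whereas the paper works directly with the Kunz word $w_1\cdots w_{m-1}\in\{1,2,3\}^{m-1}$.
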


In this paper, we study the asymptotics of the quantities $\hat n_g$ and $s_g = t_g - t_{g-1} - t_{g-2}$. Our main result is the following bound:
\begin{thm} \label{thm:mainbound}
Let $r_1 = \limsup_{g \to \infty} s_g^{1/g}$ and $r_2 = \limsup_{g \to \infty} \hat n_g^{1/g}$. Then $\sfr_{1.51} \leq r_1 \leq r_2 \leq \sfr_{1.54}$, where $\sfr_{1.51} \approx 1.51519$ and $\sfr_{1.54} \approx 1.54930$ are algebraic numbers defined in \cref{sec:prelim}.
\end{thm}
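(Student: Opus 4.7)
The plan is to establish the three inequalities $\sfr_{1.51} \leq r_1$, $r_1 \leq r_2$, and $r_2 \leq \sfr_{1.54}$ separately, with all three built on Kunz's parametrization of numerical semigroups of multiplicity $m$ by lattice tuples $(x_1, \ldots, x_{m-1}) \in \mathbb{Z}_{\geq 1}^{m-1}$ satisfying the Ap\'ery-type inequalities $x_i + x_j \geq x_{i+j \bmod m}$ (with a $+1$ correction when $i+j > m$). Under this identification the genus is $g = \sum x_i$ and the depth is $\max x_i$, so depth $\leq 3$ corresponds to $\set{1,2,3\}$-valued Kunz tuples and depth $\geq 4$ corresponds to tuples with at least one entry $\geq 4$.

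For the upper bound $r_2 \leq \sfr_{1.54}$, I would fix $m$ and overcount the depth-$\geq 4$ Kunz tuples by relaxing the full Kunz system to its pairwise (adjacent-residue) inequalities, presenting the count as a weighted homomorphism count into a finite constraint graph, with each vertex carrying a weight $z^{x_i}$ for an optimally chosen $z$. Zhao's bound on weighted graph homomorphisms then replaces the global count by a product of local maxima, which in turn is controlled by the dominant eigenvalue of a constant-size transfer matrix that I expect to coincide with $\sfr_{1.54}$. Summing over multiplicities $m \leq g$ contributes only a polynomial factor.

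For the lower bound $r_1 \geq \sfr_{1.51}$, I would construct an explicit family of depth-$\leq 3$ Kunz tuples, of multiplicity roughly proportional to $g$, with a rigid pattern of entries in $\set{1,2,3}$ that can be counted by a transfer matrix whose dominant eigenvalue is exactly $\sfr_{1.51}$. The family must be chosen so that these semigroups cannot be accounted for by the Fibonacci main term appearing in \cref{thm:efbound}; concretely, I would aim for an injection from this family into $s_g$ by verifying that removing one or two gaps (the Eliahou--Fromentin correspondence behind $t_{g-1} + t_{g-2}$) never lands in the family, so each member contributes genuine surplus to $s_g$. For $r_1 \leq r_2$, I would exhibit a bounded-to-one map from the Kunz tuples counted by $s_g$ into the Kunz tuples counted by $\hat n_{g+O(1)}$, most naturally by locating a distinguished coordinate whose value obstructs the Fibonacci reduction and promoting it from $\leq 3$ to $4$ while compensating elsewhere to preserve the semigroup axioms and adjust the genus by $O(1)$.

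The principal obstacle will be the upper bound: a naive encoding of the Kunz inequalities into a graph homomorphism problem only recovers the trivial $\varphi^g$ growth and yields no strengthening of \cref{thm:zhaimostthree}. To push below $\varphi$, the encoding must exploit the \emph{global} rigidity forced by even a single entry $x_i \geq 4$, which through the Kunz inequalities propagates large values to many other residue classes; quantifying this propagation precisely enough to feed into Zhao's entropy inequality, rather than losing it in the relaxation, is where the substantive work lies.
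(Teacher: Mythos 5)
Your choice of tools (Kunz coordinates, Zhao's homomorphism bound) and the overall three-part structure match the paper, and your plan for $r_1 \le r_2$---promote a coordinate from $\le 3$ to $4$ by a bounded-to-one map shifting the genus by $O(1)$---is essentially the paper's argument (the paper sends a stressed word $w_1 \cdots w_{\ell-1}3$ of length $\ell$ to the depth-$4$ Kunz word $4 w_2 \cdots w_{\ell-2}$). But there are two real problems. First, your lower bound plan is unnecessarily roundabout and implicitly presupposes a combinatorial description of $s_g$: you want to inject a family into ``the Kunz tuples counted by $s_g$'' and then check that the Eliahou--Fromentin reduction avoids it, but $s_g = t_g - t_{g-1} - t_{g-2}$ is just a number until you prove it counts something. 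The paper resolves this first, showing $s_g$ equals the number of $3$-Kunz words ending in $3$ (``stressed words''), after which the lower bound is a one-line construction: take words of length $2k+1$ with the first $k$ letters in $\{2,3\}$, the next $k$ in $\{1,2,3\}$, and a terminal $3$; the generating function $x^3/(1-x^3(x+1)(x^2+x+1))$ has radius of convergence $1/\sfr_{1.51}$. You should establish the stressed-word characterization before attempting any injection.

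Second, and more seriously, your upper bound is a self-acknowledged gap, and the gap is exactly where the content of the theorem lives. You correctly diagnose that relaxing the Kunz inequalities and feeding the result directly into Zhao only recovers $\varphi$, and that the game is to ``quantify the propagation'' from a single entry $\ge 4$; but you then offer no mechanism, and the one you gesture at (a transfer matrix over adjacent-residue constraints) does not match the structure of the problem---the binding constraints $w_i + w_j \ge w_{i+j}$ couple residue classes $i,j$ whose \emph{sum} lies in a fixed set, not adjacent ones. The paper's mechanism is quite specific and you'd need something like it: introduce a ``dense depth'' $q'$ (the largest value appearing $\ge k^2$ times), split into cases on $(q,q')$, handle $q \ge 9$ by a direct convergence estimate (a single large entry forces $\ge (\ell-1)/2$ heavy pairs), handle $q' \le 1$ by an averaging contradiction, and for $2 \le q' \le 8$ use a pigeonhole argument to locate a block of $k$ positions where the value $q'$ concentrates, ``standardize'' the word to erase the finitely many large entries, and only then apply Zhao's inequality to the graph on $\mathbb{Z}/d\mathbb{Z}$ with edges $\{i,j\}$ for $i+j \in A$. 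The polynomial $x^4(x+1)^2(x^4+2x^3+x^2+1)$ defining $\sfr_{1.54}$ emerges from evaluating $H_z(K_{k,k},H)$ in the $q'=3$ case, not from a transfer-matrix eigenvalue. Without something equivalent to the dense-depth/pigeonhole/standardization scaffolding, the upper bound does not go through.
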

The two most important tools in the proof of \cref{thm:mainbound} are the use of Kunz coordinates and a result by Zhao giving bounds on graph homomorphisms.

One notable aspect of our proof is that it is independent of \cref{thm:zhaiphi,thm:zhaimostthree}. In fact, it is easy to see that \cref{thm:zhaimostthree} is a consequence of \cref{thm:mainbound}. Moreover, it is possible to use \cref{thm:mainbound} to prove the following bound on $n_g$:
\begin{thm} \label{thm:ngasymp}
Let $S = \frac{\varphi}{\sqrt{5}}(1 + \sum_{g \geq 3} s_g \varphi^{-g})$. Then $\limsup_{g \to \infty} \abs{n_g - S\varphi^g}^{1/g} \leq \sfr_{1.54}$.
\end{thm}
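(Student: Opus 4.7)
The plan is to split $n_g = t_g + \hat n_g$ and handle the two pieces separately. For $\hat n_g$, \cref{thm:mainbound} already gives $\hat n_g \leq C_\epsilon(\sfr_{1.54} + \epsilon)^g$ for any $\epsilon > 0$, so this contribution fits within the claimed error. The substantive task is therefore to prove $|t_g - S\varphi^g| = O((\sfr_{1.54} + \epsilon)^g)$.

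For this I would use the generating function
\[
T(x) = \sum_{g \geq 0} t_g x^g = \frac{1 + S_0(x)}{1 - x - x^2}, \qquad S_0(x) = \sum_{g \geq 1} s_g x^g,
\]
which follows from the recurrence $t_g - t_{g-1} - t_{g-2} = s_g$ (valid for $g \geq 2$, with the convention $s_1 = s_2 = 0$) together with initial values $t_0 = t_1 = 1$, $t_2 = 2$. Extracting coefficients via $1/(1-x-x^2) = \sum_n F_{n+1} x^n$ and substituting Binet's formula $F_{n+1} = (\varphi^{n+1} - \bar\varphi^{n+1})/\sqrt{5}$ with $\bar\varphi = -1/\varphi$ yields, after rearrangement,
\[
t_g - S\varphi^g = -\frac{\varphi^{g+1}}{\sqrt{5}} \sum_{k > g} s_k \varphi^{-k} - \frac{\bar\varphi^{g+1}}{\sqrt{5}}\Bigl(1 + \sum_{k=1}^g s_k \bar\varphi^{-k}\Bigr),
\]
where $S$ is as defined; its defining series converges precisely because $r_1 \leq \sfr_{1.54} < \varphi$ by \cref{thm:mainbound}.

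Both error terms can then be bounded using $s_k \leq C_\epsilon(\sfr_{1.54} + \epsilon)^k$. The first is a geometric tail of ratio $(\sfr_{1.54} + \epsilon)/\varphi < 1$, so after multiplying by $\varphi^{g+1}$ it contributes $O((\sfr_{1.54} + \epsilon)^g)$. For the second, since $|\bar\varphi| = 1/\varphi$, each summand is bounded by $|\bar\varphi|^{g+1-k}(\sfr_{1.54} + \epsilon)^k$; because the ratio $(\sfr_{1.54} + \epsilon)/|\bar\varphi| = (\sfr_{1.54} + \epsilon)\varphi > 1$, the geometric sum is dominated by its $k = g$ term, again yielding $O((\sfr_{1.54} + \epsilon)^g)$ after using $|\bar\varphi|\varphi = 1$. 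Taking $g$-th roots, $\limsup$, and $\epsilon \to 0$ completes the argument.

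Given \cref{thm:mainbound}, this is essentially a routine coefficient-extraction calculation with no serious obstacle. The one subtlety worth flagging is that $S_0(x)$ cannot be evaluated at $x = 1/\bar\varphi = -\varphi$ (since $|{-\varphi}| > 1/r_1$ and the defining series diverges), so the $\bar\varphi$-error must be handled as a bona fide finite geometric sum rather than extracted via a residue or partial-fraction identity.
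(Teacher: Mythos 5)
Your proposal is correct and is essentially the paper's own argument. You arrive at the same identity
\[
n_g - S\varphi^g = \hat n_g - \frac{\varphi^{g+1}}{\sqrt{5}}\sum_{k>g} s_k \varphi^{-k} - \frac{(-\varphi)^{-(g+1)}}{\sqrt{5}}\Bigl(1 + \sum_{k=1}^g s_k (-\varphi)^{k}\Bigr),
\]
and bound the three error terms in the same way using \cref{thm:mainbound}; the only cosmetic difference is that you frame the derivation via the generating function $(1+S_0(x))/(1-x-x^2)$, while the paper substitutes Binet's formula directly into \cref{cor:tgdecomp}, which is the same computation.
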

In particular, \cref{thm:ngasymp} implies that $\abs{n_g - S\varphi^g} = o(\varphi^g)$, so it is a strengthening of \cref{thm:zhaiphi} and provides an alternative expression for the constant $S$. (For a comparison of this expression with others, see \cref{rmk:scorr}.) Therefore, an additional contribution of this paper is to give alternative proofs of \cref{thm:zhaiphi,thm:zhaimostthree}.

We also have the following result, which may be seen as another generalization of \cref{thm:zhaimostthree}:
\begin{thm} \label{thm:twoeps}
Fix some $0 < \eps < 1$. Then, if $n_{g,\eps}$ is the number of numerical semigroups of genus $g$ with $f > (2+\eps)m$, we have $\limsup_{g \to \infty} n_{g,\eps}^{1/g} < \varphi$.
\end{thm}

More generally, our approach also provides an important framework for answering statistical questions about a ``typical'' numerical semigroup of a given genus. For example, we show in \cref{thm:fm2m} that, for most numerical semigroups $\Lambda$ of a given genus, the quantity $f(\Lambda) - 2m(\Lambda)$ is close to zero, and we determine its limiting distribution, strengthening results of Kaplan and Ye \cite{KaplanYe} and Singhal \cite{Singhal}. It is likely possible to extend these techniques to other statistics concerning numerical semigroups.

Finally, we develop and implement algorithms that evaluate $s_g$, and thus $t_g$, for all $g \leq 95$, a large improvement upon \cite{Eliahou2020}, which only computes $t_g$ for $g \leq 65$. Combining these results with known values for $n_g$ for $g \leq 72$ \cite[\href{https://oeis.org/A007323}{A007323}]{BrasAmoros2020,oeis}, we are able to make conjectures for the asymptotics of $\hat n_g$ and $s_g$ that open up an intriguing avenue for future study. Future work in this area could lead to a proof or disproof of \cref{conj:bafib} for sufficiently large $g$.

\subsection*{Comparison with the work of Bacher}
A recent preprint of Bacher \cite{bacher}, written independently of this work, reproves \cref{thm:zhaiphi,thm:zhaimostthree} in a manner similar to what we do here. Like this paper, Bacher focuses on Kunz coordinates and defines stressed words (though he calls them ``NSG-compositions with maximum $3$ ending with a part of maximal size''). Moreover, Bacher uses the same framework of bounding growth rates by looking at the convergence of generating functions, and proves both the lower bound $r_1 \geq \sfr_{1.51}$, in the language of \cref{thm:mainbound}, and the result that $S = \frac{\varphi}{\sqrt{5}}(1 + \sum_{g \geq 3} s_g \varphi^{-g})$. Moreover, he proposes the same algorithm for computing $s_g$ as we do in \cref{sec:program}, although he only computes $s_g$ up to $g=50$.

Our work improves upon that of Bacher in two main ways. For one, Bacher appears to be solely interested in proving that $r_1$ and $r_2$, in the language of \cref{thm:mainbound}, are strictly less than $\varphi$, and thus makes several ``inefficient'' arguments that simplify the analysis but mask the true growth rate. The second difference is that without Zhao's lemma bounding graph homomorphisms, Bacher bounds the relevant objects ``manually'', which becomes rather complicated even when the underlying graph structure is relatively simple; loosely speaking, in the language of \cref{sec:upper}, Bacher goes as far as the case $k = 3$, which already involves rather involved computations concerning crystallographic reflection groups and transfer matrices. This is enough to prove $r_1,r_2 < \varphi$, but it is unclear if this technique can be pushed further. Our work, in contrast, uses the language of graph homomorphisms to sidestep most of these complexities.

\subsection*{Outline}
\Cref{sec:prelim} introduces various definitions and lemmas used in the rest of the paper, while \cref{sec:kunz} develops the theory of Kunz coordinates and objects we call stressed words. The proof of \cref{thm:mainbound} is spread across \cref{sec:lower} and \cref{sec:upper}. \Cref{sec:statistics} discusses various applications of our results to the statistics of numerical semigroups, including the proofs of \cref{thm:ngasymp,thm:twoeps}. Finally, \cref{sec:program} discusses the computation of $s_g$, numerical results, and subsequent conjectures.

\section{Preliminaries} \label{sec:prelim}
\subsection{Notation and conventions}
The set of nonnegative integers is denoted $\setn_0$. For a positive integer $a$, let $[a] = \set{1, 2, \ldots, a}$. For real numbers $a < b$, it will also be useful to define the set $(a, b] = \setmid{n \in \setz}{n \geq 1, a < n \leq b}$. Observe that all elements of $(a, b]$ are positive integers.

The Fibonacci numbers are denoted $F_n$, with $F_0 = 0$, $F_1 = 1$, and $F_n = F_{n-1} + F_{n-2}$ for $n > 1$. There is also an explicit formula of $F_n = \frac{1}{\sqrt{5}}(\varphi^n - (-\varphi)^{-n})$.

We define the constants $\sfr_{1.51} \approx 1.51519$ and $\sfr_{1.54} \approx 1.54930$ so that $1/\sfr_{1.51}$ and $1/\sfr_{1.54}$ are the unique positive zeros of the polynomials $x^3(x+1)(x^2+x+1) - 1$ and $x^4(x+1)^2(x^4+2x^3+x^2+1) - 1$, respectively. Alternatively, $\sfr_{1.51}$ and $\sfr_{1.54}$ can be described as the unique positive zeros of the polynomials $x^6-x^3-2x^2-2x-1$ and $x^{10}-x^6-2x^5-2x^4-4x^3-6x^2-4x-1$.

\subsection{Bounding weighted graph homomorphisms}
In this section we state Zhao's graph homomorphism lemma and reframe it in a form which will be useful in \cref{sec:upper}.

All mentions of graphs in this paper are confined to this section. Here, all graphs are assumed to be finite, undirected, and with no multiple edges. However, they may contain loops unless otherwise stated. For any graph $G$, we let $V(G)$ denote the vertices of $G$ and $E(G)$ denote the edges of $G$. Let $K_{a,b}$ be the complete bipartite graph with parts of size $a$ and $b$.

Following \cite{Zhao2011}, we define a graph $H$, possibly with loops, to be a \vocab{threshold graph} if there is some function $g \colon V(H) \to \setr$ and real number $t$ such that for (possibly equal) $u,v \in V(H)$, we have $uv \in E(H)$ if and only if $g(u)+g(v) \leq t$.

For graphs $G$ and $H$, let $\Hom(G, H)$ be the set of graph homomorphisms\footnote{Recall that a graph homomorphism between two graphs $G$ and $H$ is a function $f \colon V(G) \to V(H)$ such that for every $uv \in E(G)$, we have $f(u)f(v) \in E(H)$.} from $G$ to $H$. Given an assignment of a nonnegative weight $z(v)$ to each vertex $v$ of $H$ and a homomorphism $f \in \Hom(G, H)$, let $z(f) = \prod_{v \in V(G)} z(f(v))$. Finally, let $\hom_z(G,H) = \sum_{f \in \Hom(G,H)} z(f)$. We can now state the following result:
\begin{thm}[{Zhao \cite[Cor.\ 7.6]{Zhao2011}}] \label{thm:zhao}
If $G$ is a loop-free, $k$-regular graph with $n$ vertices and $H$ is a threshold graph, then $\hom_z(G, H) \leq \hom_z(K_{k,k}, H)^{n/(2k)}$ for all nonnegative weight assignments $z$.
\end{thm}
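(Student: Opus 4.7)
The plan is a two-step reduction that establishes the sandwich
\[
H_z(G,H)^2 \;\leq\; H_z(G \times K_2, H) \;\leq\; H_z(K_{k,k}, H)^{n/k},
\]
after which taking square roots yields the theorem. Here $G \times K_2$ is the bipartite double cover of $G$: a bipartite $k$-regular graph on $2n$ vertices whose homomorphisms to $H$ correspond to pairs of maps $(\phi_0, \phi_1) : V(G) \to V(H)$ satisfying $\phi_0(u)\phi_1(v), \phi_1(u)\phi_0(v) \in E(H)$ for every $uv \in E(G)$.

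The right-hand inequality is the bipartite case of the theorem applied to $G \times K_2$, which is classical (Galvin--Tetali) and follows from the entropy method. Selecting a random homomorphism $\phi$ from $G \times K_2$ to $H$ with probability proportional to $z(\phi)$, Shearer's inequality applied to the entropy of $\phi$ restricted to one side of the bipartition conditional on the other side bounds the log-partition function by a per-vertex contribution whose extremizer is $K_{k,k}$. This step requires nothing about $H$ beyond being a weighted graph, and is independent of the threshold hypothesis.

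The left-hand inequality is the main obstacle, and is the only place the threshold structure of $H$ is used. Order $V(H) = \{v_1, \ldots, v_m\}$ so that $g(v_1) \leq g(v_2) \leq \cdots$, and for each pair $(f_1, f_2) : V(G) \to V(H)$ define the pointwise sorted pair $(\widetilde f_1, \widetilde f_2)$ with $g(\widetilde f_1(v)) \leq g(\widetilde f_2(v))$ at every $v$. Sorting preserves the product weight. I would first verify: if $(f_1, f_2)$ is a pair of $H$-homomorphisms of $G$, then $(\widetilde f_1, \widetilde f_2)$ is a $G \times K_2$-homomorphism. Indeed, for each edge $uv$, the conditions $g(f_i(u)) + g(f_i(v)) \leq t$ for $i = 1, 2$ combined with the pointwise monotonicity $g(\widetilde f_1(\cdot)) \leq g(\widetilde f_2(\cdot))$ yield both $g(\widetilde f_1(u)) + g(\widetilde f_2(v)) \leq t$ and $g(\widetilde f_2(u)) + g(\widetilde f_1(v)) \leq t$ by a short case analysis on whether $u$ and $v$ were swapped.

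To finish the sandwich, I would compare fibers. Over each sorted pair $(\widetilde f_1, \widetilde f_2)$, the fiber under sorting has at most $2^d$ hom-pair preimages, where $d = |\{v : \widetilde f_1(v) \neq \widetilde f_2(v)\}|$. On the other hand, the coordinate swaps $\phi_0(v) \leftrightarrow \phi_1(v)$ preserve the $G \times K_2$-homomorphism condition (by symmetry of the cross-edge condition), so the full swap orbit of $(\widetilde f_1, \widetilde f_2)$ has size exactly $2^d$ and consists entirely of $G \times K_2$-homomorphisms with common product weight $z(\widetilde f_1) z(\widetilde f_2)$. Summing over sorted representatives gives
\[
H_z(G, H)^2 = \sum_{(\widetilde f_1, \widetilde f_2)} (\text{fiber size}) \cdot z(\widetilde f_1) z(\widetilde f_2) \;\leq\; \sum_{(\widetilde f_1, \widetilde f_2)} 2^d \cdot z(\widetilde f_1) z(\widetilde f_2) = H_z(G \times K_2, H),
\]
completing the sandwich and hence the theorem.
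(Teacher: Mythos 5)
Your two-step sandwich via the bipartite double cover is exactly Zhao's route, and the two outer steps are sound: pointwise sorting does carry a pair of $H$-homomorphisms of $G$ to a $G \times K_2$-homomorphism (this is where the threshold hypothesis enters, and your case check is correct), and the bipartite Galvin--Tetali bound handles $G \times K_2$.

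The gap is in the final counting. Swapping $\phi_0(u) \leftrightarrow \phi_1(u)$ at a \emph{single} vertex does not preserve the $G \times K_2$-homomorphism property: for an incident edge $uv$ it replaces the cross constraints $\phi_0(u)\phi_1(v), \phi_1(u)\phi_0(v) \in E(H)$ by the parallel constraints $\phi_0(u)\phi_0(v), \phi_1(u)\phi_1(v) \in E(H)$, which need not hold; the ``symmetry'' you invoke is only a symmetry under a simultaneous swap at both endpoints. Concretely, with $V(H)=\{a,b,c\}$, $g(a)=1$, $g(b)=2$, $g(c)=3$, threshold $4$, $G$ a single edge $uv$, $\widetilde f_1(u)=\widetilde f_1(v)=a$, $\widetilde f_2(u)=b$, $\widetilde f_2(v)=c$: this is a sorted $G\times K_2$-homomorphism, yet the flip at $u$ alone demands the non-edge $bc$. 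So the swap orbit need not lie in $\Hom(G\times K_2,H)$, and your claimed identity $\sum 2^d z(\widetilde f_1)z(\widetilde f_2) = H_z(G\times K_2,H)$ is in fact a ``$\geq$,'' which points the wrong way and does not close the chain. The repair, as in Zhao's paper, is to compare fiber sizes directly: with $D$ the set of coordinates where $\widetilde f_1 \neq \widetilde f_2$ and $E_0$ the edges inside $D$ on which the sorted pair satisfies the cross constraint but not the parallel one, the flips $S \subseteq D$ that yield a pair of $G$-homomorphisms are precisely the proper $2$-colorings of $(D,E_0)$, while the flips that yield a $G\times K_2$-homomorphism are precisely the colorings constant on each connected component of $(D,E_0)$. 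The former count (which is $2$ raised to the number of components if $(D,E_0)$ is bipartite, and $0$ otherwise) never exceeds the latter (always $2$ raised to the number of components), and that comparison is what actually gives $H_z(G,H)^2 \leq H_z(G\times K_2,H)$.
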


Of use to us will be the following corollary:
\begin{cor} \label{cor:zhaocor}
Fix
\begin{itemize}
\item a finite set $V$ of real numbers;
\item a nonnegative weight $z(v)$ for every $v \in V$;
\item a real number $t$;
\item positive integers $d$ and $k$;
\item a subset $A \subseteq \setz/d\setz$ of size $k$, where $\setz/d\setz$ is the abelian group of integers modulo $d$.
\end{itemize}
Let $\mathcal{F}$ be the set of functions $f\colon \setz/d\setz \to V$ such that $f(a) + f(b) \geq t$ for all $a,b$ with $a + b \in A$. Furthermore, let $\mathcal{X}$ be the set of tuples $(a_1, \ldots, a_k, b_1, \ldots, b_k) \in V^{2k}$ with $\min(a_1, \ldots, a_k) + \min(b_1, \ldots, b_k) \geq t$. Then
\[\sum_{f \in \mathcal{F}} \prod_{a \in \setz/d\setz} z(f(a)) \leq \paren*{\sum_{(a_1, \ldots, a_k, b_1, \ldots, b_k) \in \mathcal{X}} \prod_{i=1}^k z(a_i)z(b_i)}^{d/(2k)}.\]
\end{cor}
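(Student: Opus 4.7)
The plan is to interpret both sides of the stated inequality as weighted graph homomorphism sums and then invoke \cref{thm:zhao}. On the target side, build the threshold graph $H$ with vertex set $V$ by placing an edge (possibly a loop) between $u, v \in V$ exactly when $u + v \geq t$; the function $g(v) = -v$ with threshold $-t$ verifies that $H$ is threshold. A homomorphism from $K_{k,k}$ into $H$ is specified by its values $(a_1, \ldots, a_k)$ on one part and $(b_1, \ldots, b_k)$ on the other, subject to $a_i + b_j \geq t$ for every $i, j$, which is equivalent to $\min_i a_i + \min_j b_j \geq t$. Hence the right-hand side equals $H_z(K_{k,k}, H)^{d/(2k)}$.

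On the source side, build the graph $G$ with vertex set $\setz/d\setz$ by placing an edge (a loop when $a=b$) between $a$ and $b$ exactly when $a + b \in A$. Each vertex $a$ has exactly $|A| = k$ neighbors $\{c - a : c \in A\}$, so $G$ is $k$-regular, and by construction $\mathcal{F}$ coincides with $\Hom(G, H)$; the left-hand side is therefore $H_z(G, H)$.

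When $2a \notin A$ for every $a$, the graph $G$ is loop-free and \cref{thm:zhao} immediately gives the bound. Otherwise, I would pass to the bipartite double cover $G^* = G \times K_2$, which is $k$-regular, loop-free, and bipartite on $2d$ vertices; applying \cref{thm:zhao} to $G^*$ yields $H_z(G^*, H) \leq H_z(K_{k,k}, H)^{d/k}$. Combining this with the inequality $H_z(G, H)^2 \leq H_z(G^*, H)$ and taking square roots completes the argument.

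The main obstacle is the inequality $H_z(G, H)^2 \leq H_z(G^*, H)$ when $G$ has loops, which is not a general property of homomorphism counts but should follow from the specific structure of our threshold $H$: the loop vertices $\{w \in V : 2w \geq t\}$ form an upward-closed set that is pairwise adjacent, so the reduction in Zhao's proof from bipartite to non-bipartite graphs can be adapted to accommodate loops in $G$.
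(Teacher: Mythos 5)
Your proposal correctly sets up the threshold graph $H$ and the loop-ful graph $G$ on $\setz/d\setz$, and the observation that the real work lies in handling loops is on target. But the specific choice of $2$-lift is where the argument goes wrong: you pass to the bipartite double cover $G^* = G \times K_2$, in which \emph{every} edge of $G$ (loop or not) becomes a pair of cross-copy edges, and then you need the inequality $H_z(G,H)^2 \leq H_z(G^*,H)$. That inequality is exactly Zhao's ``bipartite swapping trick,'' the central technical content of the cited paper, and it is nontrivial: given two homomorphisms $f_1, f_2 \colon G \to H$, the naive pair $h(a,i) = f_i(a)$ is \emph{not} a homomorphism $G^* \to H$, because a cross-edge $(a,1)(b,2)$ with $a \neq b$ requires $f_1(a) + f_2(b) \geq t$, which does not follow from $f_1(a)+f_1(b) \geq t$ and $f_2(a)+f_2(b)\geq t$. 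You flag this and say it ``should follow,'' but establishing it amounts to re-deriving Zhao's swapping argument (extended to loop-ful sources), which is a substantial gap rather than a detail.

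The paper avoids this entirely by choosing a \emph{different} $2$-lift: the graph on $\setz/d\setz \times \{1,2\}$ where non-loop edges of $G$ are duplicated \emph{within} each copy ($i=j$, $a \neq b$) and only the loops become \emph{cross} edges ($i \neq j$, $a=b$). This graph is still $k$-regular and loop-free, so \cref{thm:zhao} applies as stated. But now the map $(f_1,f_2)\mapsto h$ with $h(a,i)=f_i(a)$ is trivially a homomorphism: within-copy edges need $f_i(a) + f_i(b) \geq t$, which holds because $f_i \in \mathcal{F}$, and cross edges at loop vertices need $f_1(a)+f_2(a)\geq t$, which follows by averaging $2f_1(a) \geq t$ and $2f_2(a)\geq t$. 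That one-line injection gives $\bigl(\sum_{f\in\mathcal{F}}\prod z(f(a))\bigr)^2 \leq H_z(G,H)$ without any swapping argument. To repair your write-up you would either have to switch to this $2$-lift, or actually carry out the swapping argument for your $G^*$, which is considerably more work than the corollary deserves.
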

\begin{proof}
Let $H$ be the graph with vertex set $V$ and an edge from $v$ to $v'$ if $v + v' \geq t$; this is clearly a threshold graph (set $g(v) = -v$). Now, construct a graph $G$ with vertex set $\setz/d\setz \times \set{1,2}$ by connecting an edge between $(a,i)$ and $(b,j)$, for $a,b\in \setz/d\setz$ and $i,j \in \set{1,2}$, if and only if both of the following conditions are satisfied:
\begin{enumerate}
\item  $a + b \in A$;
\item  $a \neq b$ and $i = j$, or $a = b$ and $i \neq j$.
\end{enumerate}
It is not hard to show that $G$ is $k$-regular, and moreover the second condition implies that $G$ is loop-free.

Applying \cref{thm:zhao}, we now find that $\hom_z(G, H) \leq \hom_z(K_{k,k}, H)^{d/k}$. Graph homomorphisms from $K_{k,k}$ to $H$ are given exactly by the elements of $\mathcal{X}$, and in fact
\[\hom_z(K_{k,k}, H) = \sum_{(a_1, \ldots, a_k, b_1, \ldots, b_k) \in \mathcal{X}} \prod_{i=1}^k z(a_i) z(b_i).\]
Therefore we only need to show that
\[\paren*{\sum_{f \in \mathcal{F}} \prod_{a \in \setz/d\setz} z(f(a))}^2 \leq \hom_z(G, H).\]
To do this, take arbitrary $f_1, f_2 \in \mathcal{F}$ and define the function $h \colon V(G) \to V$ by setting $h(a, i) = f_i(a)$. We claim that $h \in \Hom(G, H)$. To show this, note that if $a + b \in A$, $a \neq b$, and $i = j$, we have $h(a, i) + h(b, j) = f_i(a) + f_i(b) \geq t$. On the other hand, if $a + b \in A$, $a = b$, and $i \neq j$, then we have $h(a, i) + h(b, j) = \frac{1}{2}(f_i(a) + f_i(b) + f_j(a) + f_j(b)) \geq t$.

It is evident that the map $(f_1, f_2) \mapsto h$ is injective. Therefore
\[\sum_{f_1,f_2 \in \mathcal{F}} \prod_{a \in \setz/d\setz} z(f_1(a))z(f_2(a)) \leq \sum_{h \in \Hom(G, H)} \prod_{a \in \setz/d\setz} z(h(a, 1))z(h(a, 2)) = \hom_z(G, H),\]
which concludes the proof.
\end{proof}

\begin{rmk}
If we define $G'$ as the graph with vertex set $\setz/d\setz$ and vertices $a$ and $b$ connected if and only if $a + b \in A$, then \cref{cor:zhaocor} can be rephrased as $\hom_z(G', H) \leq \hom_z(K_{k,k}, H)^{d/(2k)}$, though this does not immediately follow from \cref{thm:zhao} since $G'$ can have loops. The trick used above to sidestep this issue can be generalized to prove \cref{thm:zhao} even when $G$ can have loops. (Here, a loop contributes $1$ to the degree of a vertex.)
\end{rmk}

\section{Kunz and Stressed Words} \label{sec:kunz}
\subsection{Kunz words}
Consider a numerical semigroup $\Lambda$ with multiplicity $m$. Since $m\in \Lambda$, by additive closure we have $\Lambda + m \subseteq \Lambda$. In particular, for each $0 \leq i < m$, there is some nonnegative integer $k_i$ such that
\[\setmid{n \in \Lambda}{n \equiv i \pmod{m}} = \set{k_i m + i, (k_i + 1)m + i, \ldots}.\]
Since $0 \in \Lambda$, we always have $k_0 = 0$. However, by the minimality of $m$, we have $k_i > 0$ for all other $i$. To capture this data, define the \vocab{Kunz coordinate vector} $\calk(\Lambda) = (k_1, k_2, \ldots, k_{m-1})$. These indices provide an important representation of $\Lambda$:
\begin{thm}[Kunz \cite{Kunz}, Rosales et al.\ \cite{Rosales2002}] \label{thm:kunzbijection}
The map $\calk$ is a bijection between numerical semigroups of multiplicity $m$ and $(m-1)$-tuples of positive integers $(k_1, k_2, \ldots, k_{m-1})$ such that $k_i + k_j \geq k_{i+j}$ and $k_i + k_j + 1 \geq k_{i+j-m}$ for all indices $i$, $j$ for which these inequalities are defined.
\end{thm}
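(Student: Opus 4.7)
The plan is to verify the bijection by checking the two directions separately. In both cases, the essential content is that additive closure of a numerical semigroup of multiplicity $m$ translates, residue class by residue class, into the two stated inequality families (depending on whether $i+j$ stays in $[0,m)$ or wraps past $m$).

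For the forward direction, I would fix a numerical semigroup $\Lambda$ of multiplicity $m$ and check the three conditions on $\calk(\Lambda) = (k_1, \ldots, k_{m-1})$. Positivity of each $k_i$ is immediate: if $k_i = 0$ for some $1 \leq i < m$, then $i \in \Lambda$, contradicting the minimality of $m$. For the inequalities, take any valid $i, j$ and consider $a = k_i m + i$ and $b = k_j m + j$, both in $\Lambda$ by definition. Then $a + b = (k_i + k_j)m + (i+j)$ belongs to $\Lambda$. If $i + j < m$, this directly expresses an element congruent to $i+j$ modulo $m$, so $k_{i+j} \leq k_i + k_j$. If $i + j > m$, I would rewrite $a + b = (k_i + k_j + 1)m + (i+j-m)$, yielding $k_{i+j-m} \leq k_i + k_j + 1$.

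For the reverse direction, given a tuple $(k_1, \ldots, k_{m-1})$ of positive integers satisfying the inequalities, I would define
\[\Lambda = \setmid{km + i}{0 \leq i < m,\ k \geq k_i},\]
with the convention $k_0 = 0$, and verify that $\Lambda$ is a numerical semigroup of multiplicity $m$ with $\calk(\Lambda)$ equal to the given tuple. The nontrivial check is additive closure: for $(k_i + s)m + i$ and $(k_j + t)m + j$ in $\Lambda$ (with $s, t \geq 0$), their sum has residue $i+j \pmod{m}$, and I would run the forward calculation in reverse, using $k_i + k_j \geq k_{i+j}$ when $i+j < m$ (and the trivial case $i+j = m$ handled by $k_0 = 0$), and $k_i + k_j + 1 \geq k_{i+j-m}$ when $i+j > m$. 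Containing $0$ is built in ($k_0 = 0$), finite complement holds because each residue class contributes cofinitely many elements, and positivity of $k_i$ for $i \geq 1$ forces $m$ to be the minimal positive element, establishing the multiplicity.

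Finally, I would observe that the two constructions are mutually inverse: reading off Kunz coordinates from the $\Lambda$ built above recovers the input tuple by construction, and conversely, starting from $\Lambda$, the definition of $k_i$ exactly describes which elements of each residue class lie in $\Lambda$, so rebuilding gives back $\Lambda$. There is no real obstacle here beyond bookkeeping; the only subtlety is treating the boundary case $i + j = m$ correctly, which is handled cleanly by the convention $k_0 = 0$.
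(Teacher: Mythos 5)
The paper does not actually prove \cref{thm:kunzbijection}; it cites the result directly from Kunz and from Rosales et al., so there is no in-text proof to compare against. Your argument is the correct and standard one: positivity of the $k_i$ is equivalent to minimality of $m$; additive closure of $\Lambda$ gives the two inequality families by splitting on whether $i+j$ overshoots $m$ (with the boundary case $i+j=m$ requiring no inequality, since every nonnegative multiple of $m$ lies in $\Lambda$); and in the reverse direction the same inequalities, together with the convention $k_0 = 0$, verify that the constructed set is closed under addition, has finite complement, and has multiplicity exactly $m$. The only detail I would make explicit is closure when one summand has residue $0$ modulo $m$: the theorem's inequalities run only over $i, j \in [m-1]$, so this case is not literally a hypothesis, but it is handled trivially by the convention $k_0 = 0$ via $k_0 + k_j = k_j \geq k_j$. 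With that sentence added, the bookkeeping is complete and the two constructions are visibly inverse to each other.
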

A common method by which to interpret this result is to imagine these tuples as the integral points inside a convex polyhedron in $(m-1)$-dimensional space. However, in this paper we will make something of a psychological shift to instead consider Kunz coordinate vectors as words drawn from an alphabet of positive integers.
\begin{defn}
A (possibly empty) word $w = w_1w_2\cdots w_\ell$ of positive integers is \vocab{Kunz} if $w_i + w_j \geq w_{i+j}$ and $w_i + w_j + 1 \geq w_{i+j-\ell-1}$ for all $i, j$ for which these inequalities are defined. If we additionally have $w_i \leq q$ for all $i$, the word $w$ is said to be \vocab{$q$-Kunz}.
\end{defn}
Thus, $\calk$ is a bijection between numerical semigroups and Kunz words.
\begin{examp}
The Kunz word of $\setn_0$ is the empty word. If $\Lambda = \setn_0 \setminus \set{1, 2, 3, 4, 5, 7, 9, 10, 13}$, then $\calk(\Lambda) = 31221$.
\end{examp}
Given this bijection, one can define the multiplicity, genus, depth, conductor, and Frobenius number of a Kunz word as that of its corresponding numerical semigroup. These properties can also be described more naturally in terms of the word itself:
\begin{prop} \label{prop:kunzprops}
Let $\Lambda$ be a numerical semigroup and let $w = w_1w_2\cdots w_\ell = \calk(\Lambda)$ be its Kunz word. Then
\begin{itemize}
\item $m(w) = \ell + 1$;
\item $g(w) = \sum_{i \in [\ell]} w_i$;
\item $q(w) = \max_{i \in [\ell]} w_i$, or $0$ if $w$ is empty.
\item $f(w) = (q - 1)m + j$, where $j$ is maximal such that $w_j = q$, or $j = 0$ if $w$ is empty.
\end{itemize}
\end{prop}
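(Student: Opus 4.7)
The proof is essentially an unpacking of the definition of Kunz coordinates, organized residue class by residue class. I would open with the convention $k_0 = 0$ and the observation that, for every $0 \leq i < m$, the set of elements of $\Lambda$ congruent to $i$ modulo $m$ is exactly $\set{k_i m + i, (k_i+1)m + i, \ldots}$. From there, the first two bullets are immediate: the multiplicity claim is tautological because $\calk$ produces a word of length $m - 1$, and the genus claim follows from decomposing $\setn_0 \setminus \Lambda$ by residue class modulo $m$. Indeed, the gaps in residue class $i \in [\ell]$ are exactly the $k_i = w_i$ numbers $\set{jm + i : 0 \leq j < k_i}$, while residue class $0$ contributes no gaps, so summing gives $g(\Lambda) = \sum_{i \in [\ell]} w_i$.

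For the last two bullets I would work out the Frobenius number first and then deduce the depth. The largest gap in residue class $i$ (when $w_i > 0$) is $(w_i - 1)m + i$, so if $w$ is nonempty,
\[f(\Lambda) = \max_{i \in [\ell]} \bigl((w_i - 1)m + i\bigr).\]
The key step is to argue that the right-hand side is lexicographically maximized by first maximizing $w_i$ and then maximizing $i$: if $w_i = q$ while $w_{i'} < q$, then $(w_i - 1)m + i \geq (q-1)m + 1$ whereas $(w_{i'} - 1)m + i' \leq (q-2)m + (m-1) = (q-1)m - 1$, so any index attaining the maximum must satisfy $w_i = q$. Among those, the quantity is strictly increasing in $i$, so the maximum is $(q-1)m + j$ for $j = \max\setmid{i \in [\ell]}{w_i = q}$. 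The empty case (corresponding to $\Lambda = \setn_0$) is handled by the convention that $q = 0$ and $j = 0$, which gives $f = -1$.

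Finally, for the depth I would simply compute $q(\Lambda) = \lceil c/m \rceil = \lceil (f+1)/m \rceil = \lceil (q-1) + (j+1)/m \rceil$. Since $1 \leq j \leq m - 1$ in the nonempty case, we have $0 < (j+1)/m \leq 1$, which forces the ceiling to equal $q$; the empty case gives $\lceil 0/1 \rceil = 0 = q$, consistent with the convention above. There is no real obstacle in this proof; the only subtlety is being careful with the empty-word case and with the lexicographic argument identifying the maximal gap.
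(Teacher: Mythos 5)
Your proof is correct and follows essentially the same route as the paper's: both handle the empty word as a separate base case, both decompose $\setn_0 \setminus \Lambda$ by residue class modulo $m$ to read off the genus and Frobenius number, and both compute the depth via $q = \lceil c/m\rceil = (q-1) + \lceil (j+1)/m\rceil$. The only difference is cosmetic — you spell out the lexicographic maximization that identifies $j$ explicitly, where the paper states the identity $f = (\max_i w_i - 1)m + j$ without comment.
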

\begin{proof}
If $\Lambda = \setn_0$, which corresponds to $w$ being empty, we have $m = 1$, $g = 0$, $q = 0$ and $f = -1$, which is consistent with the expressions for $m, g, q, f$. Henceforth assume $\Lambda \neq \setn_0$.

The fact that $m = \ell + 1$ follows from the definition of $\calk$. Also, from the definition of $\calk$ we find
\[\setn_0 \setminus \Lambda = \bigcup_{i \in [\ell]} \set{i, m + i, \ldots, (w_i - 1)m + i}.\]
Thus, $g = \abs{\setn_0 \setminus \Lambda} = \sum_{i \in [\ell]} w_i$ and $f = \max(\setn_0 \setminus \Lambda) = (\max_{i \in [\ell]} w_i - 1)m + j$. Finally, $q = \ceil{c/m} = \max_{i \in [\ell]} w_i - 1 + \ceil*{\frac{j+1}{m}} = \max_{i \in [\ell]} w_i$.
\end{proof}
In particular, $q$-Kunz words of genus $g$ correspond to semigroups of genus $g$ and depth at most $q$.

\subsection{\texorpdfstring{$2$}{2}-Kunz words}
As a warmup, we will first demonstrate how Kunz words can enumerate semigroups of depth at most $2$.

Consider a word $w = w_1w_2 \cdots w_\ell$ on the alphabet $\set{1,2}$.  For all $i, j$, we find $w_i + w_j \geq 2 \geq w_{i+j}$ and $w_i + w_j + 1 \geq 3 \geq w_{i+j-\ell-1}$, so $w$ is automatically $2$-Kunz.

\begin{prop}[\cite{Zhao2010}] \label{prop:2kunz}
The number of numerical semigroups of genus $g$ and depth at most $2$ is $F_{g+1}$.
\end{prop}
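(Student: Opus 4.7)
The plan is to reduce the count to the classical identity that the number of compositions of $g$ into parts of size $1$ and $2$ equals $F_{g+1}$.

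By \cref{thm:kunzbijection} together with \cref{prop:kunzprops}, numerical semigroups of genus $g$ and depth at most $2$ correspond bijectively to $2$-Kunz words $w = w_1 w_2 \cdots w_\ell$ with $\sum_{i \in [\ell]} w_i = g$. The paragraph immediately preceding the proposition already verifies that \emph{every} word over the alphabet $\set{1,2}$ is automatically $2$-Kunz, so the Kunz condition imposes no further constraint. Hence I just need to count words on $\set{1,2}$ whose letters sum to $g$, i.e., compositions of $g$ into $1$'s and $2$'s (where the empty word accounts for $g = 0$).

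Let $a_g$ denote the number of such compositions. Conditioning on the final letter gives the recursion $a_g = a_{g-1} + a_{g-2}$ for $g \geq 2$, since a composition ending in $1$ (resp.\ $2$) arises uniquely by appending that letter to a composition of $g-1$ (resp.\ $g-2$). The base cases are $a_0 = 1$ (the empty word) and $a_1 = 1$ (the word $1$). Comparing with the Fibonacci recursion and initial values $F_1 = F_2 = 1$ yields $a_g = F_{g+1}$ by induction.

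There is no real obstacle here: the only substantive input is the observation that the Kunz conditions are vacuous when the alphabet is $\set{1,2}$, which is the content of the paragraph preceding the statement. The rest is a one-line bijective reduction to a textbook Fibonacci identity, and the proof should fit comfortably in a few lines.
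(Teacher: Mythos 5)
Your proposal is correct and follows essentially the same route as the paper: both reduce to counting $2$-Kunz words via the Kunz bijection, observe (as in the preceding paragraph) that every word over $\set{1,2}$ is automatically $2$-Kunz, and establish the Fibonacci recursion $a_g = a_{g-1} + a_{g-2}$ with base cases $a_0 = a_1 = 1$ by conditioning on the last letter.
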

\begin{proof}
Let the number of $2$-Kunz words of genus $g$ be $a_g$. We can calculate $a_0 = 1 = F_1$ and $a_1 = 1 = F_2$. Moreover, every nonempty $2$-Kunz word can be generated from a shorter $2$-Kunz word by appending a $1$ or a $2$, so for $g > 1$ we find $a_g = a_{g-1} + a_{g-2}$. By induction, $a_g = F_{g+1}$. 
\end{proof}

\subsection{\texorpdfstring{$3$}{3}-Kunz words}
Consider a word $w = w_1w_2 \cdots w_\ell$ on the alphabet $\set{1,2,3}$. For all $i, j$, we find $w_i + w_j + 1 \geq 3 \geq w_{i+j-\ell-1}$, so $w$ is $3$-Kunz if and only if there do not exist indices $i$ and $j$ with $w_i = w_j = 1$ and $w_{i+j} = 3$.
\begin{prop} \label{prop:3kunzop}
\begin{parts}
\item If $w$ is nonempty and $3$-Kunz, then the word $w'$ formed by deleting the last character in $w$ is also $3$-Kunz.
\item If $w$ is $3$-Kunz, so are the words $w1$ and $w2$. (Here $w1$ refers to the word created by appending a $1$ to the end of $w$, and similarly for $w2$.)
\end{parts}
\end{prop}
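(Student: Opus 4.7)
The plan is to invoke the characterization given immediately above the proposition: a word $w = w_1 \cdots w_\ell$ on $\set{1,2,3}$ fails to be $3$-Kunz exactly when there exists a ``bad triple'' consisting of indices $i, j$ with $i + j \leq \ell$ and $w_i = w_j = 1$, $w_{i+j} = 3$. Both parts of the proposition then reduce to checking that the operation in question can only destroy bad triples, not create them.

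For part (a), I would let $w' = w_1 \cdots w_{\ell-1}$ and observe that any bad triple $(i,j,i+j)$ inside $w'$ is automatically a bad triple inside $w$, because the entries at positions $1, \ldots, \ell-1$ and the range $i+j \leq \ell - 1 < \ell$ transfer verbatim. So a violation in $w'$ forces a violation in $w$, contradicting the $3$-Kunz hypothesis on $w$.

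For part (b), I would treat $w1$ and $w2$ separately, each of length $\ell+1$. For $w1$, the new entry at position $\ell+1$ has value $1$, so it could in principle play the role of $i$ or $j$ in a bad triple, but then the other index is at least $1$ and $i + j \geq \ell + 2$ is out of range; and if $i + j = \ell+1$, then $w_{i+j} = 1 \neq 3$. Thus every putative bad triple actually sits inside positions $1, \ldots, \ell$ and descends to a bad triple in $w$. For $w2$, the new entry is $2$, which matches neither the value $1$ required of $w_i, w_j$ nor the value $3$ required of $w_{i+j}$, so position $\ell+1$ cannot participate in a bad triple at all, and again any violation must already lie in $w$.

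The whole argument is routine case analysis; the only thing one needs to be careful about is correctly tracking the range $i + j \leq \ell + 1$ in part (b), since forgetting the case $i + j = \ell + 1$ (where the appended position itself plays the role of $i+j$) is the natural pitfall. I would expect the write-up to be only a few lines per part.
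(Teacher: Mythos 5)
Your proposal is correct and takes essentially the same approach as the paper: use the ``no bad triple $(i,j)$ with $w_i=w_j=1$, $w_{i+j}=3$'' characterization, argue by contrapositive that a violation in the modified word would descend to a violation in $w$, and for part (b) note that the appended position cannot participate in a bad triple (either by an out-of-range check for $w1$ or because $2$ matches neither required value for $w2$). Your treatment of $w2$ is slightly more explicit than the paper's ``similar reasoning,'' but the argument is the same.
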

\begin{proof}
If $w'$ is not $3$-Kunz, then there are $i, j$ with $w'_i = w'_j = 1$ and $w'_{i+j} = 3$. But then $w_i = w_j = 1$ and $w_{i+j} = 3$, contradicting the fact that $w$ is $3$-Kunz.

Now suppose $v = w1$ is not $3$-Kunz; call its length $\ell$. If $v_i = v_j = 1$ and $v_{i+j} = 3$, then since $v_\ell = 1$ we find $i, j, i+j < \ell$. Thus $w_i = w_j = 1$ and $w_{i+j} = 3$, again contradicting the fact that $w$ is $3$-Kunz. Similar reasoning proves that $w2$ is $3$-Kunz.
\end{proof}

At this point, we will now give a short proof of \cref{thm:efbound}.
\begin{proof}[Proof of \cref{thm:efbound}]
The number $t_g$ counts the number of $3$-Kunz words of genus $g$. By \cref{prop:3kunzop}, those that end with $1$ are precisely the words $w1$ where $w$ is any $3$-Kunz word of genus $g-1$, of which there are $t_{g-1}$. Similarly, there are exactly $t_{g-2}$ such words that end in $2$. The words that end in $3$ must all be of the form $w3$ for some $3$-Kunz word $w$ of genus $g-3$, but not all words of this form may necessarily be $3$-Kunz. Hence, $t_{g-1} + t_{g-2} \leq t_g \leq t_{g-1} + t_{g-2} + t_{g-3}$, as desired.
\end{proof}

This proof motivates the definition of a stressed word.
\begin{defn}
A word of positive integers is \vocab{stressed} if it is $3$-Kunz and ends in $3$.
\end{defn}
It is apparent from the above proof that the number of stressed words of genus $g \geq 3$ is $t_g - t_{g-1} - t_{g-2}$, a quantity we will define to be $s_g$. To be consistent with the idea that $s_g$ counts stressed words of a given genus, we also define $s_0 = s_1 = s_2 = 0$.
\begin{examp}
The stressed words of length at most $3$ are $3$, $23$, $33$, $123$, $213$, $313$, $223$, $233$, $323$, and $333$.
\end{examp}

\begin{prop} \label{prop:3kdecomp}
Kunz words of depth $3$ are in bijection with pairs of stressed words and $2$-Kunz words, with the bijection given by concatenation.
\end{prop}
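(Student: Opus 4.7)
The plan is to define the bijection explicitly by splitting at the last $3$. Given a Kunz word $w = w_1\cdots w_\ell$ of depth $3$, I would let $i$ be the largest index with $w_i = 3$; this index exists because \cref{prop:kunzprops} identifies the depth with $\max_j w_j$. Write $w = uv$ with $u = w_1\cdots w_i$ and $v = w_{i+1}\cdots w_\ell$. Then $v$ consists only of $1$'s and $2$'s by maximality of $i$, so it is automatically $2$-Kunz, and iterating the prefix-deletion part of \cref{prop:3kunzop} shows $u$ is $3$-Kunz; since $u$ ends in $3$ by construction, $u$ is stressed.

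For the inverse map, given a stressed word $u$ of length $i$ and a $2$-Kunz word $v$ of length $\ell - i$, I would form $uv$ and argue that it is a Kunz word of depth $3$ and length $\ell$. Its alphabet is contained in $\set{1,2,3}$ and the letter $3$ appears at position $i$, so once $uv$ is known to be Kunz, \cref{prop:kunzprops} forces the depth to equal $3$. Verifying the $3$-Kunz condition is where the meat of the argument lies, and this is the step I expect to require the most care. Using the observation noted above that on the alphabet $\set{1,2,3}$ a word fails to be $3$-Kunz only via indices $a, b$ with $w_a = w_b = 1$ and $w_{a+b} = 3$, and noting that $3$'s appear only in the $u$-portion of $uv$, any such triple must satisfy $a + b \leq i$ and hence $a, b \leq i$; this would already witness a failure of $u$ being $3$-Kunz, a contradiction.

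Finally, I would confirm the two maps are mutually inverse: the split point produced by the forward map on $uv$ is the position of the last $3$ in $uv$, which coincides with the length of $u$ because $u$ ends in $3$ and $v$ contains no $3$'s. Beyond the $3$-Kunz verification above, I do not anticipate any further obstacle.
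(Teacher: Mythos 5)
Your proposal is correct and follows essentially the same route as the paper: split at the last $3$, use \cref{prop:3kunzop}(a) for the forward direction, and check the $3$-Kunz condition for the concatenation in the inverse direction. The only cosmetic difference is that the paper cites \cref{prop:3kunzop}(b) iteratively for the inverse map, while you verify the condition directly from the observation that a failure on the alphabet $\set{1,2,3}$ must come from indices $a,b$ with $w_a=w_b=1$ and $w_{a+b}=3$; both amount to the same argument.
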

\begin{proof}
Suppose $w = w_1w_2\cdots w_\ell$ is Kunz of depth $3$. If $i$ is maximal such that $w_i = 3$, then \cref{prop:3kunzop} implies $w_1w_2\cdots w_i$ is stressed and $w_{i+1}w_{i+2}\cdots w_\ell$ is $2$-Kunz. Moreover, if $w$ is stressed and $v$ is $2$-Kunz, then \cref{prop:3kunzop} yields that the concatenation of $w$ and $v$ is Kunz of depth $3$. These two operations are clearly inverse to each other.
\end{proof}
\begin{cor} \label{cor:tgdecomp}
For all $g \geq 0$, $t_g = F_{g+1} + \sum_{i=1}^g s_i F_{g+1-i}$.
\end{cor}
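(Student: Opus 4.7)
The plan is to split the $3$-Kunz words of genus $g$ by depth and apply \cref{prop:3kdecomp}. By \cref{prop:kunzprops}, the genus of a Kunz word is just the sum of its characters, so genus is additive under concatenation; this is the one observation that lets us convert the length-based statement of \cref{prop:3kdecomp} into a genus-based count.

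First I would note that $t_g$ counts $3$-Kunz words of genus $g$, which partition into those of depth at most $2$ and those of depth exactly $3$. The first class consists of $2$-Kunz words of genus $g$, which by the proposition from the $2$-Kunz subsection number $F_{g+1}$.

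Next, for the depth-$3$ words, I would apply \cref{prop:3kdecomp} to write each such word as a concatenation $w'w''$ of a stressed word $w'$ and a $2$-Kunz word $w''$. Because genus is additive under concatenation, if $w'$ has genus $i$ then $w''$ has genus $g-i$. A stressed word ends in $3$ and is therefore nonempty with genus at least $3$, while $w''$ may be empty. Thus the number of depth-$3$ Kunz words of genus $g$ equals
\[\sum_{i \geq 1} s_i \cdot (\text{number of $2$-Kunz words of genus } g-i) = \sum_{i=1}^{g} s_i F_{g+1-i},\]
where the bounds $1 \leq i \leq g$ are fine because $s_0 = s_1 = s_2 = 0$ by convention. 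Adding this to the $F_{g+1}$ count for depth at most $2$ yields the desired formula.

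I do not expect any substantive obstacle: \cref{prop:3kdecomp} does all the structural work, and the only thing to verify is the bookkeeping of genera, which is immediate from the character-sum formula for $g(w)$. The edge case $g = 0$ also works, giving $t_0 = F_1 = 1$, corresponding to the empty word.
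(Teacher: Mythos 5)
Your proposal is correct and matches the (implicit) argument the paper has in mind: the corollary is stated without a separate proof because it follows directly from \cref{prop:3kdecomp} by splitting $3$-Kunz words into those of depth at most $2$ (counted by $F_{g+1}$) and those of depth exactly $3$ (decomposed as a stressed word of genus $i$ concatenated with a $2$-Kunz word of genus $g-i$, using additivity of genus under concatenation). Your bookkeeping on the index range, the role of the conventions $s_0 = s_1 = s_2 = 0$, and the $g=0$ edge case are all accurate.
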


\begin{defn} \label{def:prefix}
Let $w$ be a $3$-Kunz word. If $q(w) = 3$, \cref{prop:3kdecomp} associates a stressed word with $w$, which we call the \emph{prefix} of $w$. If $q(w) \leq 2$, we define the prefix of $w$ to be the empty word.
\end{defn}

\section{First Bounds on Growth Rates} \label{sec:lower}
In this section, we prove the first two bounds of \cref{thm:mainbound}, namely that $r_1 \geq \sfr_{1.51}$ and $r_1 \leq r_2$, deferring the proof that $r_2 \leq \sfr_{1.54}$, by far the most involved part, to \cref{sec:upper}.

\subsection{A lower bound on \texorpdfstring{$r_1$}{r₁}}
Since $w$ is $3$-Kunz if and only if there are no $i, j$ with $w_i = w_j = 1$ and $w_{i+j} = 3$, a sufficient condition for a word of length $\ell$ to be $3$-Kunz is that $w_i > 1$ for all $i \leq \ell/2$. In particular, if the length is $2k + 1$ for $k \geq 0$, a word is stressed if the first $k$ characters are in the set $\set{2,3}$, the next $k$ are in the set $\set{1,2,3}$, and the last is $3$. Therefore, if we define the generating function
\[f(x) = \sum_{k \geq 0} x^3(x^2+x^3)^k(x+x^2+x^3)^k = \frac{x^3}{1-x^3(x+1)(x^2+x+1)} = \sum_{g \geq 0} s'_g x^g,\]
we find that $s'_g \leq s_g$.

We now claim that $\lim_{g \to \infty} (s'_g)^{1/g} = \sfr_{1.51}$. This is a standard result in the theory of generating functions; a more detailed exposition can be found in many standard texts, e.g.\ \cite[Ch.\ 5]{wilf}. In our case, observe that $1-x^3(x+1)(x^2+x+1)$ has a unique zero of smallest magnitude, namely $1/\sfr_{1.51}$, which happens to be simple. Therefore, by the theory of partial fractions, $f(x)$ can be written as $\tilde f(x) + \frac{C}{x - (1/\sfr_{1.51})}$, where $\tilde f(x)$ has a radius of convergence strictly greater than $1/\sfr_{1.51}$ and $C$ is some nonzero real constant. The result follows immediately.

\subsection{Proof that \texorpdfstring{$r_1 \leq r_2$}{r₁ ≤ r₂}}
We first prove a useful lemma.
\begin{lem}
If $g \geq 7$, then $s_g \leq 2\hat n_{g-2} + 3\hat n_{g-3} + 2\hat n_{g-4} + \hat n_{g-5}$.
\end{lem}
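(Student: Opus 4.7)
The plan is to construct an injection from stressed words of genus $g$ into a labeled union, over $k \in \set{2, 3, 4, 5}$, of copies (of multiplicity $a_k$, with $(a_2, a_3, a_4, a_5) = (2, 3, 2, 1)$) of the set of Kunz words of depth at least $4$ and genus $g - k$. These coefficients are exactly the number of ordered pairs $(\alpha, \beta) \in \set{1, 2, 3}^2$ with $\alpha + \beta = k + 1$ excluding $(1, 1)$ (which would correspond to $k = 1$), so a natural choice for the label is such a pair, encoding part of the trailing structure of the stressed word.

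Since $g \geq 7$ and every letter of a $3$-Kunz word is at most $3$, the length $\ell$ of $w$ satisfies $\ell \geq \lceil g/3 \rceil \geq 3$, so $w$ has at least three letters. Writing $a = w_{\ell-2}$ and $b = w_{\ell-1}$, and recalling that $w_\ell = 3$ since $w$ is stressed, the natural candidate sends $w$ to $(u, (a, b))$, where $u = w_1 \cdots w_{\ell-3} \cdot 4$ is obtained by replacing the trailing $ab3$ by a single character $4$. Then $u$ has length $\ell - 2$, contains the letter $4$ (so has depth at least $4$), and has genus $g - (a + b - 1)$. For the generic case $(a, b) \neq (1, 1)$, the exponent $k = a + b - 1$ lies in $\set{2, 3, 4, 5}$, matching the claimed multiplicities. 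Injectivity of this natural map is immediate, since from $(u, (a, b))$ one recovers $w$ by restoring $ab3$ in place of the terminal $4$.

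The main obstacle is that $u$ need not be Kunz. The Kunz inequalities for $u$ reduce to requiring $w_i + w_j \geq 4$ for all $i + j = \ell - 2$ with $1 \leq i, j \leq \ell - 3$, which is strictly stronger than what the Kunz condition on $w$ supplies, namely $w_i + w_j \geq w_{\ell - 2} = a \leq 3$. Handling this failure, together with the exceptional case $(a, b) = (1, 1)$, is where the real work lies. I would perform case analysis on $(a, b)$ and the possible witnessing pairs $(w_i, w_j) \in \set{(1,1), (1,2), (2,1)}$, exploiting the $3$-Kunz characterization that no $i, j$ have $w_i = w_j = 1$ and $w_{i+j} = 3$ (which outlaws certain failures when $a = 3$). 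In each problematic case, I would substitute a modified $u$ (for example, prepending the $4$ instead of appending it, or stepping one character further back into $w$ when the trailing triple is $113$) while preserving the label and keeping the target genus in $\set{g - 2, g - 3, g - 4, g - 5}$. A careful matching of failure patterns to alternative constructions ensures the combined map remains injective, so its image has size at most $\sum_{k = 2}^{5} a_k \hat n_{g - k}$, yielding the desired bound.
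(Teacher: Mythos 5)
Your high-level plan---inject stressed words of genus $g$ into a union over $k \in \{2,3,4,5\}$ of labeled copies of depth-$\geq 4$ Kunz words of genus $g-k$, where the label is a pair of tracked characters whose sum determines $k$, giving coefficients $(2,3,2,1)$---matches the shape of the paper's argument. But your specific realization has two genuine gaps that your sketch does not close. First, with your choice $a = w_{\ell-2}$, $b = w_{\ell-1}$, the pair $(a,b)=(1,1)$ is \emph{not} excluded: $w = 22113$ is a stressed word of genus $9$ with $w_3 = w_4 = 1$. The Kunz inequality forces $w_i + w_{\ell-i} \geq w_\ell = 3$, which constrains $(w_1,w_{\ell-1})$ but says nothing about $(w_{\ell-2},w_{\ell-1})$, so such words would land in genus $g-1$, for which the bound has no term. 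Second, as you observe, appending the $4$ breaks the first-type Kunz inequality at $i+j=\ell-2$; and your suggested patch of prepending instead does not work if you still delete $w_{\ell-2}w_{\ell-1}w_\ell$, because then $u_i = w_{i-1}$ for $i \geq 2$ and the hypotheses on $w$ bound $w_{i+j-2}$, not the $w_{i+j-1}$ you would need.

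The paper sidesteps both problems at once with $\Phi(w) = 4\,w_2 w_3 \cdots w_{\ell-2}$: delete $w_1$, $w_{\ell-1}$, $w_\ell$ and \emph{prepend} the $4$, labeling by $(a,b) = (w_1, w_{\ell-1})$. The pair $(1,1)$ is then automatically impossible since $w_1 + w_{\ell-1} \geq w_\ell = 3$. And because $\Phi(w)_i = w_i$ for $2 \leq i \leq \ell-2$, the first-type Kunz conditions for $\Phi(w)$ inherit directly from those for $w$, while the $4$ at position $1$ is reached only by second-type conditions at $i+j=\ell$, which demand $\Phi(w)_i + \Phi(w)_j + 1 \geq 4$, exactly what $w_i + w_j \geq w_\ell = 3$ supplies. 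So $\Phi(w)$ is always Kunz of depth $4$ with no case analysis needed; the clean choice of which three characters to excise (first one plus last two, not last three) and where to insert the $4$ is the idea your sketch is missing.
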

\begin{proof}
If $w = w_1w_2\cdots w_{\ell-1}3$ is stressed of genus $g$ and has length $\ell \geq 3$, let $\Phi(w) = 4w_2w_3\cdots w_{\ell - 2}$. We claim that $\Phi(w)$ is Kunz. Indeed, for $i, j \in [\ell - 2]$ with $i + j \leq \ell-2$, we have $\Phi(w)_i + \Phi(w)_j \geq w_i + w_j \geq w_{i+j} = \Phi(w)_{i+j}$. Also, if $i + j \geq \ell+1$, then $\Phi(w)_i + \Phi(w)_j + 1\geq 3 \geq \Phi(w)_{i+j-(\ell-1)}$. Lastly, if $i + j = \ell$, then $\Phi(w)_i + \Phi(w)_j \geq w_i + w_j \geq w_{i+j} = 3 = \Phi(w)_1 - 1$.

For $g$ a positive integer and $a, b \in [3]$, let $\cals(g, a, b)$ be the set of stressed words of genus $g$ and of the form $w = aw_2\cdots w_{\ell-2}b3$, where $\ell \geq 3$ is arbitrary. Since every stressed word of genus $g \geq 7$ has length at least $3$, and $\cals(g, 1, 1)$ is empty, we know that for $g \geq 7$ we have
\[s_g = \sum_{\substack{a,b\in [3] \\ (a,b) \neq (1,1)}} \abs{\cals(g, a, b)}.\]
On the other hand, $\Phi$, when restricted to each of the $\cals(g, a, b)$, is injective, and produces a Kunz word of genus $g + 1 - a - b$ and depth $4$. Therefore $\abs{\cals(g, a, b)} \leq \hat n_{g+1-a-b}$. The result follows.
\end{proof}

To finish, we note that by the definition of $r_2$, for all $\eps > 0$ there is some $g_0$ such that $\hat n_g < (r_2 + \eps)^g$ for all $g > g_0$. (If $r_2$ were infinite the statement that $r_1 \leq r_2$ would be tautological.) Therefore, for every $\eps > 0$ we find that for sufficiently large $g$, 
\[s_g < 2(r_2+\eps)^{g-2}+3(r_2+\eps)^{g-3}+2(r_2+\eps)^{g-4}+(r_2+\eps)^{g-5} < 8(r_2+\eps)^g.\]
Thus $r_1 \leq r_2 + \eps$ for all $\eps$, implying that $r_1 \leq r_2$.

\section{The Main Upper Bound} \label{sec:upper}
\subsection{Introduction and outline of proof}
In this section we will prove that $r_2 \leq \sfr_{1.54}$, finishing the proof of \cref{thm:mainbound}. Our general technique will be to use the fact that $1/r_2$ is the radius of convergence of the generating function $\sum_{g \geq 1} \hat n_g x^g$. In particular, we will be done if we can show that if $\abs{x} < \sfr\inv_{1.54}$, then the sum $\sum_{g \geq 1} \hat n_g x^g$ converges. Since $\hat n_g \geq 0$ for all $g$, we may additionally assume that $x$ is a positive real number. It will be further convenient to assume that $x > 5/8$.

At this point, we let $k$ be a positive integer that we will determine at a later time. It will be true that as $x$ approaches $\sfr\inv_{1.54}$, the value of $k$ needed for the proof to work will become arbitrarily large.

Define the \vocab{$k$-dense depth} $q'$ of a Kunz word to be the largest positive integer such that there are at least $k^2$ occurrences of $q'$ in the word, or $0$ if such a $q'$ does not exist. Note that $q'$ is at most the depth $q$. For positive integers $q, q', \ell, p$ with $q' \leq q$ and $p \leq \ell$, define the following sets:
\begin{align*}
\cals_q(\ell) &= \set{\text{Kunz words of depth $q$ and length $\ell$}}, \\
\cals_{q,q'}(\ell) &= \set{\text{Kunz words of depth $q$, $k$-dense depth $q'$, and length $\ell$}},\\
\cals_{q, q'} (\ell, p) &= \set{\text{words } w \in \cals_{q, q'}(\ell) \text{ such that }w_p = q}.
\end{align*}
Moreover, given a word $w = w_1w_2\cdots w_\ell$ consisting of positive integers, we define its weight $z(w) = x^{\sum_{i\in [\ell]} w_i}$. In this section, we will work with several (finite) sets of such words with names containing the symbol $\cals$, like those defined above. As a notational shorthand, we denote the sum of the weights of the elements of such sets by switching the $\cals$ into a $\calw$. For example,
\[\calw_q(\ell) = \sum_{w \in \cals_q(\ell)} z(w).\]

The main structure of the proof is to break up the sum $\sum_{g \geq 1} \hat n_g x^g = \sum_{\ell \geq 1} \sum_{q \geq 4} \calw_q(\ell)$ into five cases, as follows:
\begin{enumerate}[label=\Roman*.]
\item All words with $q \geq 9$.
\item Words with $q' \leq 1$ and $4 \leq q \leq 8$.
\item Words with $q' = 2$ and $4 \leq q \leq 8$.
\item Words with $q' = 3$ and $4 \leq q \leq 8$.
\item Words with $4 \leq q' \leq q \leq 8$.
\end{enumerate}
Specifically, Case I establishes that $\sum_{\ell \geq 1} \sum_{q \geq 9} \calw_q(\ell)$ converges, after which it will suffice to show that for the finitely many remaining pairs $(q, q')$, the sum $\sum_{\ell \geq 1} \calw_{q,q'}(\ell)$ converges, which is the content of the remaining cases. For each $(q, q')$, we will show that the sum converges for sufficiently large $k$, meaning that by making $k$ large enough, all of these sums will converge, which will show the desired result.

For Cases III, IV, and V, the general strategy will be to use the pigeonhole principle to find occurrences of $q'$ that are close together in the word, and then to apply a ``standardization map'' to transform the word into objects that can either be handled directly, as in Case III, or bounded using \cref{cor:zhaocor}, as in Cases IV and V. These techniques will be developed in two interlude subsections.

The only part of the proof that is critically dependent on the condition $x < \sfr_{1.54}\inv$ is Case IV. The remainder of the proof can be easily modified to work with any $x < \sfr_{1.51}\inv$.

\subsection{Case I: \texorpdfstring{$q \geq 9$}{q ≥ 9}}
The objective of this section is to find a $q_0$ such that we can prove that $\sum_{\ell = 1}^\infty \sum_{q = q_0}^\infty \calw_q(\ell)$ converges. To see this, pick some $p \in [\ell]$ and consider all Kunz words $w = w_1w_2\cdots w_\ell$ such that $w_p = q \geq q_0$. Since $w$ is Kunz, it can be easily shown that $w_i + w_j \geq w_p -1 \geq q_0 - 1$ whenever $i + j \equiv p \pmod{\ell + 1}$. The map $i \mapsto p - i \pmod{\ell + 1}$ is an involution on $[\ell] \setminus \set{p}$, so we conclude that the sum of the weights of all Kunz words of this form is at most $x^q\alpha^a\beta^b$
where $a$ and $b$ are the number of cycles of length $1$ and $2$ created by this involution, respectively, and
\[\alpha = \sum_{2i \geq q_0 - 1} x^i \text{ and } \beta = \sum_{i + j \geq q_0 - 1} x^{i+j}.\]
Since $\alpha^2 \leq \beta$, this is bounded above by $x^q \beta^{\frac a2+b} = x^q \beta^{\frac{\ell - 1}{2}}$. Therefore, summing over all $q$ and $p$,
\[\sum_{q = q_0}^\infty \calw_q(\ell) \leq \ell \paren*{\sum_{q \geq q_0} x^q}\beta^{\frac{\ell - 1}{2}}.\]
This expression converges when summed over all $\ell$ if and only if $\beta < 1$. Computations with software show that $q_0 = 9$ satisfies this condition.

\subsection{Case II: \texorpdfstring{$q' \leq 1$}{q' ≤ 1}}
Fix some $q' \leq 1$ and $4 \leq q \leq 8$ and consider some $w \in \cals_{q,q'}(\ell, p)$.

Again, it is easy to see that $w_i + w_j \geq w_p-1 =  q - 1$ for all $i + j \equiv p \pmod{\ell + 1}$, so by summing over all $i \neq p$ we find that $2\sum_{i \neq p} w_i \geq (q-1)(\ell - 1)$, meaning that the average of all the $w_i$ is greater than $\frac{q-1}{2} \geq \frac{3}{2} > q'$.

On the other hand, there are at most $(q - q')k^2$ integers larger than $q'$ in any word in $\cals_{q,q'}$, so the average of all the $w_i$ is bounded above by
\[\frac{(q-q')k^2 \cdot q + (\ell - (q-q')k^2) \cdot q'}{\ell} = q' + \frac{(q-q')^2k^2}{\ell}.\]

Therefore, $\cals_{q,q'}(\ell)$ is empty for sufficiently large $\ell$. Therefore $\sum_{\ell = 1}^\infty \calw_{q,q'}(\ell)$ converges\footnote{This same argument works whenever $q>2q'+1$, but using this fact for $q' \geq 2$ is unnecessary for our purposes as we can get the high-$q$ cases ``for free'' in our later arguments.}.

\subsection{Interlude I: Pigeonhole trick}
For $d, p \in [\ell]$, let
\begin{align*}
\cals_{q, q'} (\ell, d, p) &= \set{\text{words } w \in \cals_{q, q'}(\ell, p) \text{ with at least $k$ values of $i \in (d - \ell/k, d]$ with $w_i = q'$}}, \\
\cals'_{q,q'}(\ell, d,p) &= \set{\text{words } w \in \cals_{q,q'}(\ell, d,p) \text{ such that } s \notin \cals_{q,q'}(\ell, d',p)\text{ for all } d < d' < \ell}.
\end{align*}
\begin{lem} \label{lem:pigeonhole}
If $q' > 0$, then
\[\cals_{q,q'}(\ell) = \bigcup_{d=1}^\ell \bigcup_{p=1}^\ell \cals'_{q, q'}(\ell, d, p).\]
In particular,
\[\calw_{q,q'}(\ell) \leq \sum_{d=1}^\ell \sum_{p=1}^\ell \calw'_{q, q'}(\ell, d, p).\]
\end{lem}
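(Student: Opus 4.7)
The right-to-left inclusion is automatic, so the task is to show every $w \in \cals_{q,q'}(\ell)$ lies in some $\cals'_{q,q'}(\ell, d, p)$; the weight inequality then follows from subadditivity over the union (overcounting only helps).

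Given such a $w$, the plan is to first pick any $p \in [\ell]$ with $w_p = q$, which exists because $w$ has depth $q$, so that $w \in \cals_{q,q'}(\ell, p)$. The crux is then to exhibit some $d \in [\ell]$ witnessing $w \in \cals_{q,q'}(\ell, d, p)$, i.e., a window $(d - \ell/k, d]$ containing at least $k$ indices $i$ with $w_i = q'$. Since the dense depth of $w$ is exactly $q' > 0$, there are at least $k^2$ such indices; list them as $p_1 < p_2 < \cdots < p_{k^2}$. I claim that one of the $k$ windows $(p_{jk} - \ell/k, p_{jk}]$ for $j = 1, \ldots, k$ already contains $k$ of these positions. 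If not, then $p_{jk} - p_{(j-1)k+1} \geq \ell/k$ for every $j \in [k]$, and telescoping (using that each intermediate gap $p_{jk+1} - p_{jk}$ is a positive integer) yields
\[\ell = k \cdot \frac{\ell}{k} \leq \sum_{j=1}^k (p_{jk} - p_{(j-1)k+1}) = p_{k^2} - p_1 - \sum_{j=1}^{k-1}(p_{jk+1} - p_{jk}) \leq (\ell - 1) - (k - 1) = \ell - k,\]
contradicting $k \geq 1$.

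Having found some admissible $d$, I take $d^*$ to be the largest element of $[\ell]$ with $w \in \cals_{q,q'}(\ell, d^*, p)$. By maximality, $w \notin \cals_{q,q'}(\ell, d', p)$ for every integer $d'$ with $d^* < d' \leq \ell$, and in particular for $d^* < d' < \ell$, so $w \in \cals'_{q,q'}(\ell, d^*, p)$, completing the union. The only step that requires any thought is the telescoping pigeonhole argument above; everything else is essentially definitional, so I do not anticipate a significant obstacle.
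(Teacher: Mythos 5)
Your proof is correct and shares the paper's overall framework: pick $p$ with $w_p = q$, locate a suitable $d$, and then pass to the maximal such $d$ to land in $\cals'_{q,q'}(\ell, d, p)$. The difference lies in how you find $d$. The paper simply partitions $[\ell]$ into the $k$ blocks $(0, \ell/k], (\ell/k, 2\ell/k], \ldots, ((k-1)\ell/k, \ell]$ and applies a one-line pigeonhole: $k^2$ positions distributed over $k$ blocks force some block to hold at least $k$ of them, and that block is (after rounding the right endpoint down to an integer) a legal window $(d - \ell/k, d]$. You instead anchor $k$ sliding windows at the data points $p_k, p_{2k}, \ldots, p_{k^2}$ and rule out the failure case by a telescoping contradiction. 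Both are sound; yours is a bit more elaborate but has the advantage of producing a $d$ that is manifestly an integer in $[\ell]$ (namely $d = p_{jk}$), whereas the paper's block endpoints $j\ell/k$ are generally non-integral and one must implicitly replace them by $\lfloor j\ell/k \rfloor$. The paper's version is shorter and the one you'd want if streamlining; your version makes the integrality of $d$ transparent at the cost of the telescoping computation.
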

\begin{proof}
Obviously, $\cals_{q,q'}(\ell) = \bigcup_{p=1}^\ell \cals_{q,q'}(\ell, p)$. For $w \in \cals_{q,q'}(\ell, p)$, by the definition of $q'$ there are at least $k^2$ values of $i$ with $w_i = q'$. The $k$ intervals $(0, \ell/k], (\ell/k, 2\ell/k], \ldots, ((k-1)\ell/k, \ell]$ cover $[\ell]$, so one of them must have at least $k$ such values of $i$. Therefore, there is some $d$ such that $w \in \cals_{q,q'}(\ell, d, p)$. By taking $d$ to be maximal, we find that $w \in \cals'_{q,q'}(\ell, d, p)$.
\end{proof}
\begin{prop} \label{prop:pigeonholeprop}
If $w \in \cals'_{q,q'}(\ell, d, p)$, there are at most $k^2$ values of $i > d$ with $w_i = q'$.
\end{prop}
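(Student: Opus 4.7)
The plan is to argue by contradiction. Suppose $w \in \cals'_{q,q'}(\ell, d, p)$ yet there are strictly more than $k^2$ indices $i \in (d, \ell]$ with $w_i = q'$. I will produce some $d' \in (d, \ell]$ for which $w \in \cals_{q,q'}(\ell, d', p)$, contradicting the maximality of $d$ that is built into the definition of $\cals'_{q,q'}$.

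The construction is essentially the pigeonhole used in the proof of \cref{lem:pigeonhole}, but translated to the right by $d$: cover $(d, d + \ell] \supseteq (d, \ell]$ by the $k$ half-open windows $J_j := (d + (j-1)\ell/k, d + j\ell/k]$ for $j = 1, 2, \ldots, k$, each of width $\ell/k$. Since more than $k^2$ candidate indices are distributed among these $k$ windows, pigeonhole yields some $J_{j^*}$ containing at least $k+1$ of them. In the generic case $d + j^* \ell/k \leq \ell$, taking $d' := d + j^* \ell/k$ makes $(d' - \ell/k, d'] = J_{j^*}$ directly certify that $w \in \cals_{q,q'}(\ell, d', p)$ with $d < d' \leq \ell$, closing the argument.

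The only real obstacle is the boundary case $j^* = k$, where $d + j^*\ell/k = d + \ell$ overshoots position $\ell$. In that situation I would instead take $d' := \ell$ and check that $(\ell - \ell/k, \ell]$ still contains $J_k \cap (d, \ell]$; this holds because sliding the right endpoint of $J_k$ leftward from $d + \ell$ down to $\ell$ slides its left endpoint by the same amount, and the new left endpoint $\ell - \ell/k$ is no larger than $d + (k-1)\ell/k$ precisely when $d \geq 0$. Since every candidate index in $J_k$ is automatically at most $\ell$, all of them still lie in $(\ell - \ell/k, \ell]$, and the contradiction goes through. This amounts to a one-line inequality and is the only subtlety beyond a standard pigeonhole count.
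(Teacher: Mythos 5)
Your proof is correct and takes essentially the same approach as the paper: apply the pigeonhole principle to the (more than $k^2$) indices $i > d$ with $w_i = q'$, distributed among $k$ intervals of width $\ell/k$, to produce an index $d' > d$ witnessing $w \in \cals_{q,q'}(\ell, d', p)$ and hence contradicting the maximality built into $\cals'_{q,q'}$. The only cosmetic difference is that you translate your windows to start at $d$, which introduces the overshoot case $j^* = k$ that you then patch; the paper instead simply reuses the same unshifted intervals $(0, \ell/k], (\ell/k, 2\ell/k], \ldots, ((k-1)\ell/k, \ell]$ from \cref{lem:pigeonhole}, restricted to the indices $i > d$, so no window ever extends past $\ell$ and the boundary case never arises.
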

\begin{proof}
If not, then ignore all the $i \leq d$ with $w_i = q'$ and apply the same argument in \cref{lem:pigeonhole} to find some $d'$ such that there are at least $k$ values of $i \in (d'-\ell/k, d']$ and $i > d$ with $w_i = q'$. Such a $d'$ must be greater than $d$, and we also have $w \in \cals_{q,q'}(\ell, d', p)$, which is a contradiction.
\end{proof}

\subsection{Interlude II: Standardization}
Now, we aim to convert the set $\cals'_{q,q'}(\ell, d, p)$ into a form that is easier to work with by jettisoning most of the ``extraneous material'', like all the occurrences of numbers greater than $q'$.  To do this, we first define the following operation:
\begin{defn}
Given $w \in \cals'_{q,q'}(\ell, d, p)$ let $I \subseteq [\ell]$ be the set of indices $i$ such that at least one of the following holds:
\begin{itemize}
\item $i \in (d - \ell/k, d]$,
\item $i \in [d]$ and $w_i > q'$, or
\item $i > d$ and $w_i \geq q'$.
\end{itemize}
Then, define $w^\circ = w^\circ_1w^\circ_2\cdots w^\circ_\ell$ so that
\begin{itemize}
\item if $i \in I$, then $w^\circ_i = q'$ if $i \leq d$ and $w^\circ_i = \max(q' - 1, 2)$ if $i > d$;
\item otherwise, $w^\circ_i = w_i$.
\end{itemize}
\end{defn}
\begin{defn}
If $A$ is a subset of $(d-\ell/k, d]$ of size $k$, let $\cals^\circ_{q,q'}(\ell, d, p, A)$ be the image of this map $w \mapsto w^\circ$ over all $w \in \cals'_{q,q'}(\ell, d, p)$ such that $w_i = q'$ for all $i \in A$.
\end{defn}

\begin{prop} \label{prop:simpprop}
A word $w \in \cals^\circ_{q,q'}(\ell, d, p, A)$ satisfies the following conditions:
\begin{itemize}
\item $w_i \in [q']$ for all $i \in [\ell]$ and $w_i \leq \max(q' - 1, 2)$ for all $i > d$,
\item $w_i + w_j \geq q'$ for all $i, j \in [d]$ with $i + j \in A \pmod{d}$ (where we treat $[d]$ as $\setz/d\setz$),
\item $w_i + w_j \geq 3$ for all $i, j \in [\ell]$ such that $i + j \equiv p \pmod{\ell + 1}$.
\end{itemize}
\end{prop}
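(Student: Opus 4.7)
The plan is to verify each bullet directly from the definition of $w = v^\circ$, where $v \in \cals'_{q,q'}(\ell,d,p)$ has $v_i = q'$ on $A$ and $I \subseteq [\ell]$ is the associated index set. Throughout I use that $q' \geq 2$, since the case $q' \leq 1$ was dispatched in Case II.

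For the first bullet I would split on the four combinations of $i \leq d$ vs.\ $i > d$ and $i \in I$ vs.\ $i \notin I$. If $i \in I$ then $w_i$ is either $q'$ or $\max(q'-1,2)$, both lying in $[q']$ since $q' \geq 2$. If $i \notin I$, the contrapositive of the three conditions defining $I$ forces either $i \leq d - \ell/k$ with $v_i \leq q'$ (when $i \leq d$) or $v_i < q'$ (when $i > d$); in each case $w_i = v_i \in [q']$, and the refinement $w_i \leq \max(q'-1,2)$ for $i > d$ comes along for free.

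For the second bullet, given $i, j \in [d]$ with $i + j \equiv a \pmod d$ for some $a \in A$, if either index lies in $I$ then its $w$-value equals $q'$, so the sum exceeds $q'$. Otherwise the first defining clause of $I$ forces $i, j \leq d - \ell/k$, so $i + j \leq 2d - 2\ell/k$. Since $A \subseteq (d - \ell/k, d]$ gives $a > d - \ell/k$, we have $a + d > 2d - \ell/k > 2d - 2\ell/k \geq i + j$, collapsing the congruence to the equality $i + j = a \leq d \leq \ell$. The Kunz inequality on $v$ then yields $v_i + v_j \geq v_a = q'$, and $w_i = v_i$, $w_j = v_j$ finishes the job.

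For the third bullet I would first show that $v_i + v_j \geq 3$ whenever $i + j \equiv p \pmod{\ell+1}$: this is either $v_i + v_j \geq v_p = q \geq 4$ (when $i + j = p$) or $v_i + v_j \geq v_p - 1 = q - 1 \geq 3$ (when $i + j = p + \ell + 1$, using the wraparound Kunz inequality). Next, inspecting the definition of $w$ shows $w_i \geq 2$ for every $i \in I$, so $w_i = 1$ requires both $i \notin I$ and $v_i = 1$. Having $w_i = w_j = 1$ would therefore give $v_i + v_j = 2$, contradicting the bound just established, so $w_i + w_j \geq 1 + 2 = 3$. The one delicate step in the whole argument is inside the second bullet, where the narrow placement $A \subseteq (d - \ell/k, d]$ is essential to upgrade a congruence modulo $d$ to an actual equality; everything else is careful bookkeeping from the definitions.
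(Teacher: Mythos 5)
Your proof is correct and follows essentially the same route as the paper's: for the second and third bullets, observe that indices in $I$ receive values $\geq 2$ (indeed $= q'$ when $i \leq d$), reduce to the case where neither index is in $I$, and then pull the inequality back to the original Kunz word $v$. The paper's proof states the first bullet is ``easy to check'' and is terse on the modular-arithmetic step $i+j = a$ in the second bullet; you fill in both carefully, and you make the standing assumption $q' \geq 2$ explicit (which the paper only implies by placing this interlude before Cases III--V), but the underlying argument is identical.
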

\begin{proof}
Let $w \in \cals'_{q,q'}(\ell, d, p)$ be such that $w_i = q'$ for all $i \in A$. It suffices to check the conditions for $w^\circ$.

The first condition is easy.  For the second, note that if $i \in I$ or $j \in I$ then the condition is true. Otherwise, we know that $i, j \leq d - \ell/k$, so $i + j \leq 2d - \ell/k$, implying that $i + j \in A$. Therefore $w^\circ_i + w^\circ_j = w_i + w_j \geq w_{i+j} = q'$.

To check the third condition, again note that if $i \in I$ or $j \in I$ then the condition is true. Otherwise, if $i + j = p$, then $w^\circ_i + w^\circ_j = w_i + w_j \geq w_{i+j} = q \geq 4$. If $i + j = p + \ell + 1$, then $w^\circ_i + w^\circ_j = w_i + w_j \geq w_{i+j-\ell-1} - 1  = q-1 \geq 3$.
\end{proof}
The map $w \mapsto w^\circ$ does not lose too much information:
\begin{lem} \label{lem:redweight}
If $q,q',\ell,d,p,A$ are as defined above,
\[\calw'_{q,q'}(\ell, d, p) \leq \ell^{10k^2} 1000^{10k^2 + \ceil{\ell/k}}\sum_A \calw^\circ_{q,q'}(\ell, d, p, A).\]
\end{lem}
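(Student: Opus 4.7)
The plan is to show that each $w \in \cals'_{q,q'}(\ell, d, p)$ can be reconstructed from its image $(A, w^\circ)$ together with a modest amount of extra information, while the weight $z(w)$ differs from $z(w^\circ)$ only by a controlled factor. For each $w$, I would first select a canonical subset $A = A(w) \subseteq (d - \ell/k, d]$ of size exactly $k$ with $w_i = q'$ throughout $A$ (such an $A$ exists by the definition of $\cals_{q,q'}(\ell, d, p)$), which places $w^\circ \in \cals^\circ_{q,q'}(\ell, d, p, A)$. Grouping the sum by $A$ then reduces matters to bounding, for each fixed $A$ and $w^\circ$, both the number of preimages $w$ and the worst-case ratio $z(w)/z(w^\circ)$.

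The next step is to bound the set $I$ of modified indices from the definition of $w^\circ$. The indices in $I \setminus (d - \ell/k, d]$ fall into two types: those with $w_i > q'$, of which there are at most $(q - q')(k^2 - 1) \leq 8k^2$ in all of $[\ell]$ (from the definition of dense depth $q'$, since each value strictly greater than $q'$ occurs fewer than $k^2$ times); and those with $i > d$ and $w_i = q'$, of which there are at most $k^2$ by \cref{prop:pigeonholeprop}. Hence $|I \setminus (d - \ell/k, d]| \leq 9k^2 \leq 10k^2$, while the first clause in the definition of $I$ contributes at most $\ceil{\ell/k}$ further indices where $w$ and $w^\circ$ may disagree.

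Given $(A, w^\circ)$, the preimage $w$ is determined by specifying the subset $I \setminus (d - \ell/k, d] \subseteq [\ell]$ and the original value $w_i$ at each position where $w_i \neq w^\circ_i$. The subset is chosen from $[\ell]$ and has size at most $10k^2$, for at most $\sum_{j \leq 10k^2} \binom{\ell}{j} \leq 2\ell^{10k^2}$ options (for $\ell \geq 2$); at each of the at most $10k^2 + \ceil{\ell/k}$ modified positions, $w_i$ lies in $[q] \subseteq [8]$, giving at most $8$ further choices. Since $x > 5/8$ and $|w_i - w^\circ_i| \leq q - 1 \leq 7$ at each modified index, the weight ratio satisfies $z(w)/z(w^\circ) \leq (8/5)^{7(10k^2 + \ceil{\ell/k})}$. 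Multiplying these estimates gives a total factor of at most $\ell^{10k^2} \cdot C^{10k^2 + \ceil{\ell/k}}$, and a numerical check that $8 \cdot (8/5)^7 \approx 215$ leaves ample slack below $1000$ for the leftover combinatorial prefactor, yielding the claimed bound after summing over $A$.

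The main obstacle is purely bookkeeping: one must reassemble the three clauses defining $I$ so that $|I \setminus (d - \ell/k, d]| \leq 10k^2$ is genuinely justified (each individual clause is loose, but their union is tight), and one must confirm that the base $1000$ is large enough to absorb all the multiplicative contributions simultaneously --- from the binomial prefactor, from the $8$ choices per index, and from the weight distortion $(8/5)^7$ per index. Once these numerical verifications are in place, the rest of the argument reduces to collecting the estimates and summing over the fibers of $w \mapsto (A, w^\circ)$.
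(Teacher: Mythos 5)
Your proposal is correct and follows essentially the same route as the paper: reconstruct $w$ from $(A, w^\circ)$ by specifying the free part of $I$ (bounded by $\ell^{10k^2}$ choices) and the values at modified positions (bounded by a per-position factor absorbed into $1000$), with the key counting step being that the number of modified indices outside $(d-\ell/k,d]$ is $O(k^2)$, via the definition of dense depth plus \cref{prop:pigeonholeprop}. The only differences are cosmetic bookkeeping: you separate the ``$8$ choices'' count from the ``$(8/5)^7$ weight distortion'' rather than bundling them as $\sum_{w_i}x^{w_i}/x^{w^\circ_i}$ as the paper does, and you track a stray factor of $2$ from the binomial sum; both fit comfortably under the $1000$ ceiling, so the conclusion is the same.
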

\begin{proof}
Every word $w \in \cals'_{q,q'}(\ell, d, p)$ has some subset $A \subseteq (d-\ell/k, d]$ of size $k$ such that $w_i = q'$ for all $i \in A$. (In fact there may be multiple such $A$.) Thus, $w^\circ \in \cals^\circ_{q,q'}(\ell,d,p,A)$ for some $A$. We now claim that for any word $w'$ in the range of $w \mapsto w^\circ$,
\[\sum_{\substack{w\in \cals'_{q,q'}(\ell,d,p) \\ w^\circ = w'}} z(w) \leq \ell^{10k^2}1000^{10k^2+\ceil{\ell/k}} z(w').\] 
Then, by summing over all $w'$, it will follow that
\begin{align*}
\calw'_{q,q'}(\ell,d,p) &\leq \ell^{10k^2}1000^{10k^2+\ceil{\ell/k}} \sum_{w' \in \bigcup_A \cals^\circ_{q,q'}(\ell,d,p,A)} z(w') \\
&\leq \ell^{10k^2} 1000^{10k^2 + \ceil{\ell/k}}\sum_A \calw^\circ_{q,q'}(\ell, d, p, A).
\end{align*}

Given some $w' = w^\circ$, the word $w$ can be reconstructed using only the information of $I$ and a choice of $w_i \in [q]$ for all $i \in I$. The set $I$ can be decomposed as $(d-\ell/k, d] \cup I'$, where $I'$ is the set of indices $i$ such that $i \leq d-\ell/k$ and $w_i > q'$, or $i > d$ and $w_i \geq q'$. If $q' < a \leq q$, then by the definition of $q'$ there exist at most $k^2$ values of $i$ with $w_i = a$. Moreover, by \cref{prop:pigeonholeprop} there are at most $k^2$ values of $i \in I'$ with $w_i = q'$. Therefore $\abs{I'} \leq (q-q'+1)k^2 \leq 10k^2$. Since $d \notin I'$, we conclude that the number of choices for $I$, which is entirely determined by $I'$, is at most\footnote{We are using the fact that for positive integers $a$ and $b$, $\binom{a-1}{0} + \cdots + \binom{a-1}{b} \leq 1 + (a-1) + \cdots + (a-1)^b \leq a^b$.}
\[\binom{\ell-1}{0} + \binom{\ell-1}{1} + \cdots + \binom{\ell-1}{10k^2} \leq \ell^{10k^2}.\]
Moreover, this bound on $\abs{I'}$ implies that $\abs{I} \leq 10k^2+\ceil{\ell/k}$. Now, we fix $I$, the sum of the weights of the words produced by, for every $i \in I$, replacing $w_i'$ with an arbitrary element of $[q]$ is
\[z(w')\prod_{i \in I}\frac{x + x^2 + \cdots + x^q}{x^{w'_i}} \leq z(w')(qx^{1-q'})^{\abs{I}} \leq z(w')1000^{10k^2+\ceil{\ell/k}},\]
where we have used the fact that $qx^{1-q'} \leq 8\cdot (8/5)^7 \leq 1000$. Summing over all $I$ proves the bound.
\end{proof}

We make one last definition in this section:
\begin{defn}
For $i, j \in [\ell]$, we say $i \sim j$ if and only if $i \neq j$ and $i + j \equiv p \pmod{\ell + 1}$.
\end{defn}
The relation $\sim$ breaks $[\ell]$ into some number of pairs, with the only elements not in a pair being $p$ or any element $i$ with $2i \equiv p \pmod{\ell+1}$, of which there are at most $3$ in total.

\subsection{Case III: \texorpdfstring{$q' = 2$}{q' = 2}}
Let $S$ be the set of indices $i$ such that $i \sim j$ for some $j$. For all $w \in \cals^\circ_{q,2}(\ell,d,p,A)$, we have that $w_i \leq 2$ for all $i$ and $w_i + w_j \geq 3$ for all $i, j$ with $i \sim j$. Therefore
\[\calw^\circ_{q,2}(\ell, d, p, A) = (2x^3+x^4)^{\abs{S}/2} (x + x^2)^{\ell - \abs{S}}.\]
Since $\ell - \abs{S} \leq 3$, and $\sqrt{2\sfr_{1.54}^{-3} + \sfr_{1.54}^{-4}} \leq 0.9$, we find that
\[\calw^\circ_{q,2}(\ell, d, p, A) \leq 0.9^{\ell - 3} 2^3 \leq 100 \cdot 0.9^\ell.\]
Therefore, after applying \cref{lem:redweight} and summing over at most $\ell^k$ possible values for $A$, $\ell$ possible values for $d$, and $\ell$ possible values for $p$, we get
\[\calw_{q,2}(\ell) \leq 100 \cdot \ell^{10k^2+k+2} 1000^{10k^2 + \ceil{\ell/k}} 0.9^\ell.\]
Summing over all $\ell$, we find that this converges if and only if $1000^{1/k} \cdot 0.9 < 1$, which is true for sufficiently large $k$.

\subsection{Case IV: \texorpdfstring{$q' = 3$}{q' = 3}} \label{subsec:caseiv}
Under the relation $\sim$, we may decompose the set $[\ell]$ into five sets:
\begin{itemize}
\item $S_1$, the set of elements $i \leq d$ not paired with an element greater than $d$,
\item $S_2$, the set of elements $i \leq d$ paired with an element greater than $d$,
\item $S_3$, the set of elements $i > d$ paired with an element in $[d]$,
\item $S_4$, the set of elements $i > d$ paired with an element greater than $d$,
\item $S_5$, the set of elements $i > d$ not paired with any element.
\end{itemize}
Now, every element $w \in \cals_{q,3}^\circ(\ell, d, p, A)$ can be generated via the following process:
\begin{itemize}
\item Choose $w_1, w_2,\ldots, w_d \in [3]$ so that $w_i + w_j \geq 3$ for all $i, j$ with $i + j \in A \pmod{d}$.
\item For all $i \in S_2$ and $j \in S_3$ with $i \sim j$, choose $w_j \in [2]$ so that $w_i + w_j \geq 3$.
\item For all $i, j \in S_4$ with $i \sim j$, choose $w_i, w_j \in [2]$ so that $w_i + w_j \geq 3$.
\item For all $i \in S_5$, choose $w_i \in [3]$ arbitrarily.
\end{itemize}
Now, if we let
\[u(a) = \sum_{\substack{b \in [2] \\a+b \geq 3}} x^{a+b}\quad \text{and} \quad u'(a) = x^a,\] we find that the weighted sum of all the words produced this way is
\[\paren*{\sum_{w_1,w_2,\ldots,w_d} \prod_{i \in S_1} u'(w_i) \prod_{i \in S_2} u(w_i)} (2x^3+x^4)^{\abs{S_4}/2} (x + x^2 + x^3)^{\abs{S_5}},\]
where the sum is over all $w_1,w_2,\ldots,w_d \in [3]$ that satisfy $w_i + w_j \geq 3$ for all $i, j$ with $i + j \in A \pmod{d}$. As a result, since $(2x^3+x^4)^{1/2}\leq 0.9$ and $(x+x^2+x^3)^{\abs{S_5}} \leq 1.5^{\abs{S_5}} \leq 10$, we conclude that
\[\calw^\circ_{q,3}(\ell, d, p, A) \leq \paren*{\sum_{w_1,w_2,\ldots,w_d} \prod_{i \in [d]} u''(w_i)} \cdot 10 \cdot 0.9^{\abs{S_4}},\]
where we let $u''(a) = \max(u(a), u'(a))$. The first term may be bounded using \cref{cor:zhaocor}\footnote{We apply \cref{cor:zhaocor} with the parameters $V 
 =[3]$, $z(v) = u''(v)$, and $t = 3$. The parameters $d$, $k$, and $A$ are the same.}, which yields that it is at most
\[\paren*{\sum_{(a_1, \ldots, a_k, b_1, \ldots, b_k) \in \mathcal{X}} \prod_{i=1}^k u''(a_i)u''(b_i)}^{d/(2k)},\]
where $\mathcal{X}$ is the set of tuples $(a_1,\ldots,a_k,b_1,\ldots,b_k)\in [3]^{2k}$ that satisfy $\min(a_1,\ldots,a_k) + \min(b_1,\ldots,b_k) \geq 3$. We may decompose $\mathcal{X} = (\set{2,3}^k \times [3]^k) \cup ([3]^k \times \set{2,3}^k)$, so
\[
\sum_{(a_1, \ldots, a_k, b_1, \ldots, b_k) \in \mathcal{X}} \prod_{i=1}^k u''(a_i)u''(b_i) \leq 2(u''(1)+u''(2)+u''(3))^k(u''(2)+u''(3))^k.
\]
Since $x > 5/8$, we have $x + x^2 > 1$, from which we find $u''(1) = \max(x^3,x) = x$, $u''(2) = \max(x^2(x+x^2),x) = x^3(x+1)$, and $u''(3) = \max(x^3(x+x^2),x^3) = x^4(x+1)$. So
\[(u''(1)+u''(2)+u''(3))(u''(2)+u''(3)) = x^4(x+1)^2(x^4+2x^3+x^2+1).\]
Call this quantity $y$; by the definition of $\sfr_{1.54}$, we know that since $x < \sfr_{1.54}\inv$, we have $y < 1$. Also, it will be useful to note that since $x > 5/8$, we have $y > 0.9^4$.

Putting everything together, we conclude that
\[\calw^\circ_{q,3}(\ell, d, p, A) \leq 2^{d/(2k)} y^{d/2} \cdot 10 \cdot 0.9^{\abs{S_4}} \leq 2^{\ell/(2k)} y^{d/2+\abs{S_4}/4} \cdot 10.\]
We know that
\[\ell - 3 \leq \abs{S_1}+\abs{S_2}+\abs{S_3} + \abs{S_4} \leq 2d + \abs{S_4},\]
so in fact
\[\calw^\circ_{q,3}(\ell, d, p, A)\leq 10 \cdot 2^{\ell/(2k)} y^{(\ell-3)/4}.\]
After applying \cref{lem:redweight} and summing over $d,p,A$, we conclude that
\[\calw_{q, 3}(\ell) \leq 10 \cdot \ell^{10k^2+k+2} 1000^{10k^2+\ceil{\ell/k}} \cdot 2^{\ell/(2k)} y^{(\ell-3)/4}.\]
Summing over all $\ell$, this converges if $1000^{1/k} \cdot 2^{1/(2k)} \cdot y^{1/4} < 1$. Since $y < 1$, this is true for sufficiently large $k$.

\subsection{Case V: \texorpdfstring{$4 \leq q' \leq 8$}{4 ≤ q' ≤ 8}}
\begin{defn}
If $\ell$ and $q$ are positive integers and $A$ is a subset of $[\ell]$ of size $k$, let $\cals^\circ_{q}(\ell, A)$ consist of words $w_1w_2\cdots w_\ell$ such that $w_i \in [q]$ for all $i$, and $w_i + w_j \geq q$ whenever $i + j \in A \pmod{\ell}$.
\end{defn}
\begin{lem} \label{lem:q4break}
If $q' \geq 4$,
\[\calw^\circ_{q,q'}(\ell, d, p, A) \leq x^{-(q'-1) \ceil{\ell/k}}\calw^\circ_{q'}(d, A) \calw^\circ_{q'-1}(\ell+\ceil{\ell/k}-d, \ell+1-A).\]
\end{lem}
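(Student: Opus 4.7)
The plan is to construct, for each $w^\circ \in \cals^\circ_{q,q'}(\ell, d, p, A)$, an associated pair $(u', v')$ with $u' \in \cals^\circ_{q'}(d, A)$ and $v' \in \cals^\circ_{q'-1}(\ell', A')$ (writing $\ell' = \ell + \ceil{\ell/k} - d$ and $A' = \ell + 1 - A$), such that the assignment $w^\circ \mapsto (u', v')$ is injective and satisfies the weight identity $z(u')z(v') = x^{(q'-1)\ceil{\ell/k}} z(w^\circ)$. Once this is done, summing over $w^\circ$ and invoking injectivity yields
\[\calw^\circ_{q,q'}(\ell, d, p, A) = x^{-(q'-1)\ceil{\ell/k}} \sum_{w^\circ} z(u') z(v') \leq x^{-(q'-1)\ceil{\ell/k}} \calw^\circ_{q'}(d, A) \calw^\circ_{q'-1}(\ell', A'),\]
which is precisely the lemma.

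The natural construction is: let $u'_i = w^\circ_i$ for $i \in [d]$; set $v'_j = w^\circ_{\ell + 1 - j}$ for $j \in [\ell - d]$, i.e., reverse the tail $w^\circ_{d+1} \cdots w^\circ_\ell$; and pad with $v'_j = q' - 1$ for $j \in (\ell - d, \ell']$. The weight identity is immediate, and injectivity is clear since $(u', v')$ determines $w^\circ$. Membership $u' \in \cals^\circ_{q'}(d, A)$ is exactly the first two conditions of \cref{prop:simpprop}.

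The heart of the argument is verifying $v' \in \cals^\circ_{q'-1}(\ell', A')$. All entries of $v'$ already lie in $[q'-1]$, so it suffices to show $v'_i + v'_j \geq q' - 1$ whenever $i + j \in A' \pmod{\ell'}$. Since $A' \subseteq [\ell + 1 - d, \ell']$, such $i + j$ equals either $\ell + 1 - a$ or $(\ell + 1 - a) + \ell'$ for some $a \in A$. If either $i$ or $j$ lies in the padded range $(\ell - d, \ell']$, the inequality is trivial: the sum is at least $(q'-1) + 1$ if one is padding, and exactly $2(q'-1)$ if both are. The case $i, j \in [\ell - d]$ with $i + j = (\ell + 1 - a) + \ell'$ cannot occur, because it would force $2(\ell - d) \geq 2\ell + 1 - d + \ceil{\ell/k} - a$, hence $a \geq d + 1 + \ceil{\ell/k}$, contradicting $a \leq d$.

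The one substantive case is $i, j \in [\ell - d]$ with $i + j = \ell + 1 - a$. Setting $i'' = \ell + 1 - i$ and $j'' = \ell + 1 - j$, both indices lie in $(d, \ell]$ and $i'' + j'' = \ell + 1 + a \geq \ell + 2$. Choose any preimage $w$ of $w^\circ$ under the $\circ$ operation with $w_a = q'$; the second Kunz inequality on $w$ then gives $w_{i''} + w_{j''} \geq w_a - 1 = q' - 1$. To transfer this bound to $w^\circ$, note that for any $i > d$ one has $w^\circ_i = \min(w_i, q'-1)$: when $i \in I$, then $w_i \geq q'$ and $w^\circ_i = q'-1$, while when $i \notin I$, then $w^\circ_i = w_i \leq q'-1$. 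A brief case split on whether $i'', j'' \in I$ then shows $w^\circ_{i''} + w^\circ_{j''} \geq q' - 1$ in all scenarios. This step is the main obstacle, as it is the only place where the Kunz structure of the underlying word $w$ must be invoked beyond the conditions of \cref{prop:simpprop}.
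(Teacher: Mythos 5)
Your proposal is correct and follows essentially the same approach as the paper: the same decomposition of $w^\circ$ into $\Phi_1 = w^\circ_1\cdots w^\circ_d$ and the reversed-and-padded tail $\Phi_2$, the same injectivity and weight-accounting argument, and the same final step of lifting to a preimage $w$ with $w_a = q'$ for $a \in A$ and invoking the second Kunz inequality to get $w_{i''} + w_{j''} \geq q' - 1$. Your observation that $w^\circ_i = \min(w_i, q'-1)$ for $i > d$ is a clean way to phrase the transfer from $w$ to $w^\circ$ that the paper handles via the equivalent case split on whether $s_{\ell+1-i} = v_{\ell+1-i}$.
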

\begin{proof}
For $w \in \cals^\circ_{q,q'}(\ell, d, p, A)$, let $\Phi_1(w) = w_1w_2\cdots w_d$ and $\Phi_2(w) = w_\ell w_{\ell-1} \cdots w_{d+1} (q'-1)^{\ceil{\ell/k}}$, where $(q' - 1)^a$ denotes $q' - 1$ repeated $a$ times. We claim that $\Phi_1(w) \in \cals^\circ_{q'}(d, A)$ and $\Phi_2(w) \in \cals^\circ_{q'-1}(\ell + \ceil{\ell/k} - d, \ell + 1 - A)$.

The fact that $\Phi(w) \in \cals^\circ_{q'}(d, A)$ follows immediately from \cref{prop:simpprop}. For the second claim, we first note that \cref{prop:simpprop} implies that $\Phi_2(w)_i \leq q'-1$ for all $i$. To show the second condition, let $d' = d-\ceil{\ell/k}$, suppose $w = v^\circ$, and take $i, j \in [\ell-d']$ so that $i + j \in \ell+1-A \pmod{\ell-d'}$. We wish to show that $\Phi_2(w)_i + \Phi_2(w)_j \geq q' - 1$. If $i > \ell-d$, then $\Phi_2(w)_i = q'-1$, so the condition is automatically satisfied. We are similarly done if $j > \ell-d$, so assume $i, j \in [\ell-d]$, so that $\Phi_2(w)_i = w_{\ell+1-i}$ and $\Phi_2(w)_j = w_{\ell+1-j}$. If $w_{\ell+1-i} \neq v_{\ell+1-i}$, then $w_{\ell+1-i} = q'-1$, so the condition is true in this case. Similarly, we are done if $w_{\ell+1-j} \neq v_{\ell+1-j}$. Now, since $\max A \leq d$, we have $\min (\ell+1-A) \geq \ell+1-d$. Since $(\ell+1-d)+(\ell-d') > 2(\ell-d) \geq i+j$, it must be true that $i + j \in \ell+1-A$. Therefore $w_{\ell+1-i} + w_{\ell+1-j} = v_{\ell+1-i} + v_{\ell+1-j} \geq v_{\ell+1-i-j} - 1 = q' - 1$, as desired.

Therefore, the map $\Phi = (\Phi_1, \Phi_2)$ is an injection
\[\cals^\circ_{q,q'}(\ell, d, p, A) \to \cals^\circ_{q'}(d, A) \times \cals^\circ_{q'-1}(\ell+ \ceil{\ell/k} - d, \ell+1-A).\] Also, we have that $z(\Phi_1(w))z(\Phi_2(w)) = x^{(q'-1)\ceil{\ell/k}} z(w)$, so the lemma follows.
\end{proof}

\begin{lem} \label{lem:cleanbound}
For a given $3 \leq q \leq 8$ there is a constant $k_0$ independent of $\ell$ and $A$ such that 
\[\calw^\circ_q(\ell, A) \leq 0.99^\ell\]
whenever $k > k_0$.
\end{lem}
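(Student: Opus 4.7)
The plan is to apply \cref{cor:zhaocor} to bound $\calw^\circ_q(\ell, A)$, extract the exponential growth rate, and then verify that this rate is strictly less than $0.99$ throughout the allowed range of $x$.

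Taking $V = [q]$, weights $z(v) = x^v$, threshold $t = q$, and modulus $d = \ell$ in \cref{cor:zhaocor}, the functions $\setz/\ell\setz \to [q]$ being counted on the left-hand side are precisely the elements of $\cals^\circ_q(\ell, A)$. This gives
\[
\calw^\circ_q(\ell, A) \leq M_q^{\ell/(2k)}, \qquad M_q := \sum_{\substack{a_1, \ldots, a_k, b_1, \ldots, b_k \in [q] \\ \min_i a_i + \min_j b_j \geq q}} \prod_{i=1}^k x^{a_i + b_i}.
\]
Setting $P_m := x^m + x^{m+1} + \cdots + x^q$ (so $P_{q+1} = 0$), the weighted count of $k$-tuples in $[q]^k$ with minimum exactly $m$ is $P_m^k - P_{m+1}^k$, so conditioning on both minimums yields
\[
M_q = \sum_{\substack{m, m' \in [q] \\ m + m' \geq q}} (P_m^k - P_{m+1}^k)(P_{m'}^k - P_{m'+1}^k) \leq \sum_{\substack{m, m' \in [q] \\ m + m' \geq q}} P_m^k P_{m'}^k.
\]

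Because the $P_m$ are strictly decreasing in $m$, any pair $(m, m')$ with $m + m' > q$ admits a shift that decreases the larger coordinate by $1$ while keeping both coordinates $\geq 1$ (the larger coordinate is at least $2$, since $q \geq 3$) and only increasing $P_m P_{m'}$. Iterating this reduction to the boundary $m + m' = q$ shows that $P_m P_{m'} \leq \mu_q := \max_{n + n' = q,\; n, n' \geq 1} P_n P_{n'}$ for every summand, so $M_q \leq q^2 \mu_q^k$ and
\[
\calw^\circ_q(\ell, A) \leq (q^2 \mu_q^k)^{\ell/(2k)} = q^{\ell/k} \mu_q^{\ell/2}.
\]
If $\mu_q^{1/2} < 0.99$, then choosing $k_0$ large enough so that $q^{1/k_0} \mu_q^{1/2} \leq 0.99$ (possible since $q^{1/k} \to 1$) establishes the bound for that $q$; taking the maximum $k_0$ across the finite range $3 \leq q \leq 9$ finishes the proof.

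It thus remains to verify $\mu_q < 0.9801$ for every $3 \leq q \leq 9$ and every $x \in (5/8, \sfr_{1.54}^{-1})$. Since each $P_n$ has nonnegative coefficients in $x$, $\mu_q$ is monotone increasing in $x$, so it suffices to check $x = \sfr_{1.54}^{-1}$. The tightest case is $q = 3$, where $\mu_3 = P_1 P_2 = x^3(x+1)(x^2+x+1)$; by the definition of $\sfr_{1.51}$ this expression equals $1$ precisely at $x = \sfr_{1.51}^{-1}$, so the strict inequality $\sfr_{1.54}^{-1} < \sfr_{1.51}^{-1}$ forces $\mu_3 < 1$, and a direct numerical evaluation gives $\mu_3 \approx 0.913$. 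The main obstacle is confirming the corresponding inequalities for $4 \leq q \leq 9$, but this is a finite numerical check, and the maxima (attained at the most balanced split of $n + n' = q$) shrink rapidly, for instance $\mu_4 = P_2^2 \approx 0.74$ and $\mu_9 = P_4 P_5 \approx 0.13$, all comfortably below $0.9801$.
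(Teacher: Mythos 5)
Your proof is correct and follows essentially the same route as the paper's: apply \cref{cor:zhaocor}, reduce to bounding the base of the resulting exponential by $\max_{n+n'=q,\,n,n'\ge 1} P_n P_{n'}$ (your $\mu_q$ is precisely the paper's $h_q(x)$), and verify numerically that this quantity lies below $0.99^2$ throughout the relevant range of $q$. The only cosmetic difference is how you collapse the sum over $\mathcal{X}$ --- you condition exactly on both minimums and then slide $(m,m')$ down to the boundary $m+m'=q$, whereas the paper directly notes that every tuple in $\mathcal{X}$ admits some $a \in [q-1]$ with $a_i \ge a$ and $b_i \ge q-a$ and union-bounds over $a$ --- but both land on the same maximum with a harmless polynomial-in-$q$ prefactor.
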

\begin{proof}
An application of \cref{cor:zhaocor}\footnote{We set $V 
 =[q]$, $z(v) = x^v$, $t = q$, and $d = \ell$. The parameters $k$ and $A$ are the same.} tells us that
\[\calw^\circ_q(\ell, A) \leq \paren*{\sum_{(a_1,\ldots,a_k,b_1,\ldots,b_k) \in \mathcal{X}} x^{\sum_{i=1}^k a_i + b_i}}^{\ell/(2k)},\]
where $\mathcal{X}$ is the set of tuples $(a_1,\ldots,a_k,b_1,\ldots,b_k) \in [q]$ with the condition $\min(a_1,\ldots,a_k) + \min(b_1,\ldots,b_k) \geq q$. Each element of $\mathcal{X}$ must have an integer $1 \leq a \leq q-1$ such that $a_i \geq a$ and $b_i \geq q-a$ for all $i$, so
\[\sum_{(a_1,\ldots,a_k,b_1,\ldots,b_k) \in \mathcal{X}} x^{\sum_{i=1}^k a_i + b_i} \leq \sum_{a=1}^{q-1} \paren*{\sum_{b=a}^q x^b}^k \paren*{\sum_{b=q-a}^{q} x^b}^k.\]
In particular, if we let
\[h_{q}(x) = \max_{a \in [q-1]} \paren*{\sum_{b=a}^q x^b} \paren*{\sum_{b=q-a}^{q} x^b},\]
we have
\[\sum_{a=1}^{q-1} \paren*{\sum_{b=a}^q x^b}^k \paren*{\sum_{b=q-a}^{q} x^b}^k \leq (q-1)h_q(x)^k.\]
As a result, we find that
\[\calw^\circ_q(\ell, A) \leq ((q-1) h_q(x)^k)^{\ell/(2k)} = ((q-1)^{1/(2k)} h_q(x)^{1/2})^\ell.\]
To show that this is less than $0.99^\ell$ for large $k$, we only need to show that $h_q(x) < 0.99^2$. Since $h_q(x)$ is increasing in $x$, this may be shown by verifying $h_q(\sfr_{1.54}^{-1}) < 0.99^2$ for all $3 \leq q \leq 8$.
\end{proof}

To finish, we note that by combining \cref{lem:q4break} and \cref{lem:cleanbound} we find that for sufficiently large $k$,
\[\calw^\circ_{q,q'}(\ell, d, p, A) \leq x^{-(q'-1)\ceil{\ell/k}} 0.99^{\ell+\ceil{\ell/k}} \leq 100^{\ceil{\ell/k}} 0.99^\ell,\]
so, applying \cref{lem:redweight} and summing over $d,p,A$, we get
\[\calw_{q,q'}(\ell) \leq \ell^{10k^2+k+2}1000^{10k^2 + \ceil{\ell/k}} 100^{\ceil{\ell/k}} 0.99^\ell.\]
The sum over all $\ell$ converges as long as $100000^{1/k} \cdot 0.99 < 1$, which is true for large enough $k$.

\section{Statistics of Numerical Semigroups} \label{sec:statistics}
In this section we will apply our results to questions regarding counts of numerical semigroups.
\subsection{Proof of \texorpdfstring{\cref{thm:ngasymp}}{Theorem \ref{thm:ngasymp}}}
Since $F_n = \frac{1}{\sqrt{5}}(\varphi^n - (-\varphi)^{-n})$, by applying \cref{cor:tgdecomp} we have
\begin{align*}
n_g - S\varphi^g &= \hat n_g + F_{g+1} + \sum_{i=1}^g s_i F_{g+1-i} - \frac{\varphi^{g+1}}{\sqrt{5}} - \sum_{i=1}^\infty s_i \frac{\varphi^{g+1-i}}{\sqrt{5}} \\
&= \hat n_g - \frac{(-\varphi)^{-(g+1)}}{\sqrt{5}} - \sum_{i=1}^g s_i \frac{(-\varphi)^{-(g+1-i)}}{\sqrt{5}} - \sum_{i=g+1}^\infty s_i \frac{\varphi^{g+1-i}}{\sqrt{5}}.
\end{align*}
It suffices to prove that for every $\eps > 0$, the absolute value of each of these terms is bounded above by $(\sfr_{1.54} + \eps)^g$ for large $g$.

For the first term, this follows immediately from \cref{thm:mainbound}. The second term decreases to $0$ as $g$ grows, so it is certainly bounded by $(\sfr_{1.54} + \eps)^g$. For the third, note that
\[\abs*{\sum_{i=1}^g s_i \frac{(-\varphi)^{-(g+1-i)}}{\sqrt{5}}} \leq \sum_{i=1}^g s_i.\]
By \cref{thm:mainbound}, we may assume $s_i < (\sfr_{1.54} + \eps/2)^i$ for large $i$, so there is some constant $C$ such that
\[\sum_{i=1}^g s_i < C + \sum_{i=1}^g (\sfr_{1.54} + \eps/2)^i < C + \frac{(\sfr_{1.54}+\eps/2)^{g+1}}{\sfr_{1.54}+\eps/2 - 1},\]
which is indeed less than $(\sfr_{1.54}+\eps)^g$ for large $g$. For the fourth, note that if $g$ is sufficiently large,
\[\sum_{i=g+1}^\infty s_i \frac{\varphi^{g+1-i}}{\sqrt{5}} < \sum_{i=g+1}^\infty (\sfr_{1.54} + \eps/2)^i \varphi^{g+1-i} = (\sfr_{1.54} + \eps/2)^{g+1} \sum_{j=0}^\infty \paren*{\frac{\sfr_{1.54}+\eps/2}{\varphi}}^j.\]
As long as $\sfr_{1.54}+\eps/2 < \varphi$, this last sum converges to a constant independent of $g$, so this last term is also bounded by $(\sfr_{1.54} + \eps)^g$. This concludes the proof.

\subsection{Proof of \texorpdfstring{\cref{thm:twoeps}}{Theorem \ref{thm:twoeps}}}
\begin{prop} \label{prop:twoepslem}
Suppose $w$ is a $3$-Kunz word such that $f(w) > (2+\eps)m(w)$. Then the genus of the prefix (in the sense of \cref{def:prefix}) of $w$ is at least $\frac{\eps}{3} g(w)$.
\end{prop}
\begin{proof}
Let $v$ be the prefix of $w$; call its length $\ell$. Since $f(w) > 2m(w)$, we must have $q(w)=3$, so by \cref{prop:kunzprops}, $f(w) = 2m(w) + \ell$. Thus, we find that $g(v) > \ell > \eps m(w) > \frac{\eps}{3}g(w)$.
\end{proof}

By \cref{prop:twoepslem} and mimicking the proof of \cref{cor:tgdecomp} we find that
\[n_{g,\eps} \leq \hat n_g + \sum_{i=\ceil{\frac{\eps}{3} g}}^g s_i F_{g+1-i}.\]
It can be easily proved by induction that $F_k < \varphi^k$ for all $k \geq 0$. Moreover, by \cref{thm:mainbound} we know that for sufficiently large $g$, we have $\hat n_g, s_g < 1.55^g$. Therefore, for sufficiently large $g$,
\[n_{g,\eps} < 1.55^g + \sum_{i=\ceil{\frac{\eps}{3} g}}^g \varphi^{g+1} \paren*{\frac{1.55}{\varphi}}^i < 1.55^g + \varphi^{g+1} \frac{(1.55/\varphi)^{\frac{\eps}{3} g}}{1-1.55/\varphi}.\]
Therefore
\[\limsup_{g \to \infty} n_{g,\eps}^{1/g} \leq \varphi \paren*{\frac{1.55}{\varphi}}^{\eps/3}.\]

\subsection{The statistic \texorpdfstring{$f - 2m$}{f − 2m}}
The strategy of decomposing a numerical semigroup into a stressed word and $2$-Kunz word, as in \cref{prop:3kdecomp}, is also useful in answering various questions regarding the properties of a typical numerical semigroup. For example, we have the following result:
\begin{thm} \label{thm:fm2m}
Let $\cals_\ell$ be the set of stressed words of length $\ell$ and let $\Lambda_g$ be a random numerical semigroup of genus $g$ (under the uniform distribution). Then, for any integer $k$,
\[\lim_{g \to \infty}\setp[f(\Lambda_g) - 2m(\Lambda_g) = k] = \begin{cases}
\frac{1}{\sqrt{5} \cdot S}\varphi^k & k < 0, \\
0 & k = 0, \\
\frac{1}{\sqrt{5} \cdot S} \sum_{w \in \cals_k} \varphi^{1 - g(w)} & k > 0.
\end{cases}\]
\end{thm}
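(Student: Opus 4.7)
The plan is to translate the statistic $f - 2m$ into a property of the Kunz word and then count directly. If $\Lambda$ has Kunz word $w = w_1 \cdots w_\ell$ of depth $q$, and $j$ denotes the rightmost position at which the value $q$ appears in $w$, then by \cref{prop:kunzprops},
\[f(\Lambda) - 2m(\Lambda) = (q-1)m + j - 2m = (q-3)m + j.\]
Since the Kunz map is a bijection and $n_g \sim S\varphi^g$, the probability in question is obtained by enumerating the Kunz words of genus $g$ whose $(q, j)$ satisfies $(q-3)(\ell+1) + j = k$, dividing by $n_g$, and letting $g \to \infty$. I will split by the value of $q$.

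If $q \geq 4$, then $f - 2m \geq m$, so $m \leq k$ and only semigroups of bounded multiplicity contribute; the number of such numerical semigroups of genus $g$ grows only polynomially in $g$, which is $o(\varphi^g)$ (one could alternatively invoke \cref{thm:mainbound} to bound the total by $\hat n_g$). If $q = 3$, then $k = j > 0$, and \cref{prop:3kdecomp} factors $w$ uniquely as a stressed prefix of length $j = k$ followed by a 2-Kunz suffix; summing over the possible prefixes $u \in \cals_k$ and the 2-Kunz tails (which have genus $g - g(u)$ and number $F_{g+1-g(u)}$) gives the count $\sum_{u \in \cals_k} F_{g+1-g(u)}$. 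If $q = 2$, then $k = j - m \in \{-\ell, \ldots, -1\}$ is negative; the rightmost $2$ lies at position $\ell + k + 1$, forcing the tail to be all $1$s, so $w = v \cdot 2 \cdot 1^{-k-1}$ with $v$ an arbitrary 2-Kunz word, and the genus constraint forces $g(v) = g + k - 1$, yielding $F_{g+k}$ choices. The cases $q \leq 1$ each contribute at most one word of each genus and are negligible; in particular $k = 0$ is realized by no word at all (since $q = 2$ forces $k \leq -1$ and $q = 3$ forces $k \geq 1$), giving the middle branch.

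To finish, apply $F_n \sim \varphi^n/\sqrt{5}$ in each case (interchanging limit and sum is justified because $\cals_k$ is finite for fixed $k$) and divide by $n_g \sim S\varphi^g$. For $k > 0$ the limit is $(\sqrt{5}\,S)^{-1} \sum_{u \in \cals_k} \varphi^{1 - g(u)}$, and for $k < 0$ it is $\varphi^k/(\sqrt{5}\,S)$, matching the statement. The only substantive step is dispatching the depth-$\geq 4$ contribution; \cref{thm:mainbound} handles this immediately, and the rest is essentially bookkeeping around \cref{prop:kunzprops,prop:3kdecomp}.
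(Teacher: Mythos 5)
Your proposal is correct and follows essentially the same route as the paper: translate $f - 2m$ into the Kunz-word data $(q,j)$ via \cref{prop:kunzprops}, split by depth, count the $q=2$ contribution as $F_{g+k}$ and the $q=3$ contribution via \cref{prop:3kdecomp} as $\sum_{u\in\cals_k}F_{g+1-g(u)}$, discard the $q\geq 4$ contribution, and normalize by $n_g\sim S\varphi^g$. The one small variation is your observation that for $q\geq 4$ the constraint $f-2m=k$ forces $m<k$, so bounded multiplicity already gives a polynomial count without appealing to \cref{thm:mainbound}; the paper instead simply absorbs that contribution into $\hat n_g$ and uses $\hat n_g=o(\varphi^g)$. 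Both are fine, and you correctly flag the paper's route as an alternative. (The paper's shortcut for $k=0$ is that $m\in\Lambda$ while $f\notin\Lambda$, so $f\neq 2m$; your case analysis arrives at the same conclusion.)
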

Since one can show that
\[\sum_{k < 0} \frac{1}{\sqrt{5} \cdot S}\varphi^k + \sum_{k > 0} \frac{1}{\sqrt{5} \cdot S} \sum_{w \in \cals_k} \varphi^{1 - g(w)} = 1,\]
this result implies that the random variable $f(\Lambda_g) - 2m(\Lambda_g)$ has a limiting distribution that can be computed in terms of stressed words. This is a refinement of the following result by Singhal:
\begin{thm}[{Singhal \cite[Thm.\ 8]{Singhal}}]
For all $\eps > 0$, there is a positive integer $N$ such that $\lim_{g \to \infty} \setp[\abs{f(\Lambda_g)-2m(\Lambda_g)} > N] < \eps$.
\end{thm}
Singhal's result was itself a strengthening of the following result of Kaplan and Ye:
\begin{thm}[{Kaplan and Ye \cite[Thm.\ 4]{KaplanYe}}]
For all $\eps > 0$, \[\lim_{g \to \infty} \setp[2-\eps < f(\Lambda_g)/m(\Lambda_g) < 2+\eps] = 1.\]
\end{thm}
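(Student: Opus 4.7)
The starting point is Proposition \ref{prop:kunzprops}, which gives $f/m = (q-1) + j/m$ with $j$ equal to the last position where the Kunz word attains its maximum $q$, and Proposition \ref{prop:3kdecomp}, which writes any depth-at-most-$3$ Kunz word uniquely as a stressed prefix $w$ of length $i$ (ending in $3$ unless empty) followed by a $2$-Kunz suffix $v$ of length $\ell - i$. Together these identify $j$ and $m$ explicitly from the decomposition. My plan is to partition the numerical semigroups of genus $g$ by depth $q$ and verify the bound $\setp[|f/m - 2| > \eps] \to 0$ separately in the three regimes $q \geq 4$, $q = 3$, and $q \leq 2$.

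For $q \geq 4$: since $\hat n_g \leq (\sfr_{1.54} + \delta)^g$ at large $g$ by Theorem \ref{thm:mainbound} and $n_g \geq t_g \geq F_{g+1} \gtrsim \varphi^g$ (from $s_i \geq 0$ in Corollary \ref{cor:tgdecomp}), the fraction $\hat n_g / n_g$ tends to $0$ because $\sfr_{1.54} < \varphi$. For $q = 3$: the last occurrence of $3$ sits at the end of the stressed prefix, so $j = i$ and $f/m = 2 + i/m$; to show $i/m \to 0$ in probability, for any threshold $L$ I would bound the count of genus-$g$ semigroups with $q = 3$ and stressed prefix of genus exceeding $L$ by $\sum_{h > L} s_h F_{g-h+1} \leq C \varphi^g (\rho/\varphi)^L$, where $\rho = \sfr_{1.54} + \delta < \varphi$. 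Dividing by $n_g \gtrsim \varphi^g$ gives a probability at most $C'(\rho/\varphi)^L$, which can be made arbitrarily small uniformly in $g$ by choosing $L$ large. Conditional on the stressed prefix having genus at most $L$, we have $i \leq L$ while the $2$-Kunz suffix has length at least $(g-L)/2$ (its characters are all $\leq 2$), so $m \geq (g-L)/2 + 1$ and hence $i/m \to 0$ as $g \to \infty$.

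For $q \leq 2$ we need a different argument, since $f/m < 2$ in this regime and we must show $f/m$ is close to $2$ from below; the $q \leq 1$ contribution consists of at most one semigroup per genus (the word $1^g$) and is thus negligible, while for $q = 2$ we have $f/m = 1 + j/m$, so it suffices to show the number of trailing $1$'s is $o(m)$ in probability. A $2$-Kunz word of genus $g$ whose last $k$ characters are all $1$ is obtained uniquely by appending $k$ ones to a $2$-Kunz word of genus $g - k$, of which there are $F_{g-k+1}$, so the probability of at least $k$ trailing $1$'s is $F_{g-k+1}/n_g = O(\varphi^{-k})$. Taking $k = \lceil \eps g / 3 \rceil$ and using the deterministic lower bound $m \geq g/2 + 1$ valid on $q \leq 2$, any outcome with fewer than $k$ trailing $1$'s satisfies $j/m \geq 1 - k/m \geq 1 - \eps$ for large $g$, and the exceptional event has probability $O(\varphi^{-\eps g / 3}) \to 0$. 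The quantitative driver throughout is the exponential gap $\sfr_{1.54} < \varphi$ supplied by Theorem \ref{thm:mainbound}; I anticipate no serious obstacle beyond tracking the random multiplicity $m$ in the $q = 2$ case, which the uniform bound $m \geq g/2 + 1$ resolves.
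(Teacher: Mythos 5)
Your proposal is correct, and it reaches the statement by a genuinely more direct route than the paper. The paper never argues for this theorem in isolation: it obtains it as a corollary of the stronger \cref{thm:fm2m}, which computes the exact limiting distribution of $f(\Lambda_g)-2m(\Lambda_g)$ by counting, for each fixed $k$, the semigroups with $f-2m=k$ (via the same identities $f=(q-1)m+j$ from \cref{prop:kunzprops} and the stressed-prefix decomposition of \cref{prop:3kdecomp}) and then dividing by $n_g\sim S\varphi^g$, so it leans on Zhai's asymptotic. You instead prove only the convergence-in-probability statement, splitting by depth and using the same structural ingredients but in ``soft'' form: the $q\geq 4$ mass is killed by $\hat n_g\leq(\sfr_{1.54}+\delta)^g$ versus $n_g\geq F_{g+1}$; the $q=3$ case is handled by truncating the prefix genus at $L$, with tail $\sum_{h>L}s_hF_{g-h+1}=O(\varphi^g(\rho/\varphi)^L)$ and the deterministic bound $m\geq(g-L)/2+1$ on the complement; and the $q=2$ case by the exact count $F_{g-k+1}$ of words with at least $k$ trailing ones together with $m\geq g/2+1$. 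All of these steps check out (in particular the bijection behind the trailing-ones count and the passage from ``prefix genus $\leq L$'' to $i\leq L$ are sound), and your argument has the pleasant feature of requiring only \cref{thm:mainbound}, \cref{cor:tgdecomp}, and the Fibonacci lower bound on $n_g$ rather than \cref{thm:zhaiphi}; what it gives up, of course, is the quantitative conclusion of \cref{thm:fm2m} that $f-2m$ has an explicit limiting distribution supported on $O(1)$ values.
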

\begin{proof}[Proof of \cref{thm:fm2m}]
For each integer $k$ and $g > 0$ let $n_{g, k}$ be the number of numerical semigroups of genus $g$ satisfying $f - 2m = k$. Since for every numerical semigroup $\Lambda$ we have $m(\Lambda) \in \Lambda$ and $f(\Lambda) \notin \Lambda$, we find $n_{g,0} = 0$. Thus the theorem is true for $k = 0$. Recall the result in \cref{prop:kunzprops} that $f = (q-1)m + j$, where $j$ is maximal such that $w_j = q$. Thus, we want to count semigroups such that $j = k + (3-q)m$.

Suppose $k < 0$. Then any semigroup with $f - 2m = k$ satisfies $q < 3$. If $q \leq 1$, there is at most one semigroup of genus $g$. If $q = 2$, then we have that $j = m + k$, so we need to count the number of words $w_1w_2\cdots w_{m-1}$ such that $w_i \in [2]$, $w_{m+k} = 2$, and $w_i = 1$ for all $i > m + k$. Removing the last $-k$ entries, which are completely determined, we wish to count $2$-Kunz words of genus $g-k-1$, of which there are $F_{g+k}$ if $g \geq -k$, by \cref{prop:2kunz}. Therefore, for large $g$, we have $F_{g+k} \leq n_{g,k} \leq F_{g+k}+1$, implying that
\[\lim_{g \to \infty} \frac{n_{g,k}}{n_g} = \lim_{g \to \infty} \frac{\frac{1}{\sqrt{5}} \varphi^{g+k}}{S\varphi^g} = \frac{\varphi^k}{\sqrt{5} \cdot S}.\]

Now suppose $k > 0$. Since $j = k + (3-q)m$ and $j < m$ we must have $q \geq 3$. For a semigroup of depth $3$, the quantity $j$ is simply the length of the prefix of the associated Kunz word (in the sense of \cref{def:prefix}). Thus, in light of \cref{prop:3kdecomp}, the number of semigroups of depth $3$ with $f-2m=k$ is, for sufficiently large $g$, given by $\sum_{w \in \cals_k} F_{g+1-g(w)}$. This implies that for large $g$,
\[\sum_{w \in \cals_k} F_{g+1-g(w)} \leq n_{g,k} \leq \hat n_g + \sum_{w \in \cals_k} F_{g+1-g(w)}.\]
Therefore,
\[\lim_{g \to \infty} \frac{n_{g,k}}{n_g} = \lim_{g \to \infty} \frac{\frac{1}{\sqrt{5}} \sum_{w \in \cals_k} \varphi^{g+1-g(w)}}{S\varphi^g} = \frac{\sum_{w \in \cals_k} \varphi^{1-g(w)}}{\sqrt{5} \cdot S}.\]
This concludes the proof.
\end{proof}
This technique can likely be extended to answer related statistical questions.

\section{Computational Results, Conjectures, and Further Directions} \label{sec:program}
\subsection{Enumerating stressed words}
A stressed word of length $\ell$ can be considered as a partition of the set $[\ell-1]$ into three sets $A$, $B$, $C$, representing the locations of ones, twos, and threes, such that $\ell \notin A + A$ and $C$ is disjoint from $A+A$. Such a word would have genus $3 + \abs{A} + 2\abs{B} + 3\abs{C} = 2\ell + 1 - \abs{A} + \abs{C}$. In particular, for a given $\ell$ and $A$ with $\ell \notin A+A$, the number of stressed words of genus $2\ell + 1 - \abs{A} + k$ is $\binom{\abs{[\ell-1] \setminus (A \cup (A+A))}}{k}$.

For a given $\ell$, it is relatively straightforward to iterate through the $3^{\ceil{\ell/2}-1}$ possibilities for $A$ by choosing at most one element from each of the sets $\set{1,\ell-1}, \set{2,\ell-2}, \ldots, \set{\ceil{\ell/2}-1, \ell+1-\ceil{\ell/2}}$. By recording the number of possibilities for $A$ with a given value of $\abs{A}$ and $\abs{[\ell-1] \setminus (A \cup (A+A))}$, the stressed words of a given length can be enumerated by genus. Moreover, if $A$ is built recursively, the set $S = [\ell-1] \setminus (A \cup (A+A))$ can be memoized, so that if an element is added to $A$, the set $S$ can be updated in $O(\ell)$ time.

Using this technique, we wrote a computer program (available at \url{https://github.com/zhdag/stressed}) to enumerate all stressed words of length at most $56$ or genus at most $95$. While the latter computation required the additional consideration of the range $57 \leq \ell \leq 62$, the program only considered possible $A$ that were sufficiently large to create words of genus at most $95$, drastically reducing the search space. Another minor optimization was the observation that for the case when $k = 0$, the number $\abs{[\ell-1] \setminus (A \cup (A+A))}$ did not need to be computed; in particular, this idea led to the $\ell = 62$ case being dropped entirely. In total, these computations required approximately 6.5 hours of single-core processor time on the author's laptop.

\begin{rmk} \label{rmk:scorr}
The observations in this section allow our relation $S=\frac{\varphi}{\sqrt{5}}(1 + \sum_{g \geq 3} s_g \varphi^{-g})$ to be related to several expressions for $S$ found elsewhere in the literature. If we fix $\ell$ and $A$, the sum of $\varphi^{-g(w)}$ over all stressed words $w$ of length $\ell$ and ones in locations given by $A$ is
\begin{align*}
\varphi^{-(2\ell + 1 - \abs{A})} (1 + \varphi\inv)^{\abs{[\ell-1]\setminus (A \cup (A+A))}} &= \varphi^{-2\ell - 1 + \abs{A}+\abs{[\ell-1]\setminus (A \cup (A+A))}} \\ &= \varphi^{-\ell-2+\abs{A}-\abs{[\ell] \cap (A\cup(A+A))}}.
\end{align*}
Summing over all $A$ and $\ell$, we obtain an expression for $S$ that matches that in \cite[Thm.\ 3.11]{Zhao2010}. Another expression for $S$ is given by O'Dorney \cite[Lems.\ 10, 11]{Odorney}\footnote{Lemma 11 in \cite{Odorney} has a minor error and should instead state $C = S\varphi^2$. }, who shows that $S = \frac{\varphi}{\sqrt{5}} (1 + \sum_{k \geq 1} a_k \varphi^{-(k+2)})$, where $a_k$ is the number of strongly descended semigroups of genus $2k+1$ and efficacy $k+1$. This too can be related to the above two sums, and a brief sketch of the correspondence is as follows: associate a pair $(\ell, A)$ with a Kunz word $w_1w_2\cdots w_{\ell + 1 - \abs{A} + 2\abs{[\ell] \cap (A\cup(A+A))}}$, where $w_i = 2$ for $i \in [\ell]\setminus A$ and $w_i = 1$ for all other $i$.
\end{rmk}

\subsection{Results}
\begin{table}[tbp]\centering { \footnotesize
\begin{tabular}{cccccccccccc}  \toprule
$\ell$ & Sum & $\ell$ & Sum & $\ell$ & Sum & $\ell$ & Sum & $\ell$ & Sum & $\ell$ & Sum \\ \midrule
1 & 0.236068 & 11 & 2.067270 & 21 & 3.288559 & 31 & 3.903892 & 41 & 4.167027 & 51 & 4.269349 \\
2 & 0.381966 & 12 & 2.182360 & 22 & 3.355010 & 32 & 3.933559 & 42 & 4.178757 & 52 & 4.273803 \\
3 & 0.618034 & 13 & 2.368955 & 23 & 3.454253 & 33 & 3.976294 & 43 & 4.196208 & 53 & 4.280242 \\
4 & 0.763932 & 14 & 2.478676 & 24 & 3.509096 & 34 & 4.001513 & 44 & 4.206072 & 54 & 4.283812 \\
5 & 1.005025 & 15 & 2.640734 & 25 & 3.593944 & 35 & 4.038160 & 45 & 4.220026 & 55 & 4.289037 \\
6 & 1.145898 & 16 & 2.737483 & 26 & 3.643206 & 36 & 4.058413 & 46 & 4.228224 & 56 & 4.291988 \\
7 & 1.380047 & 17 & 2.886712 & 27 & 3.713999 & 37 & 4.089100 & 47 & 4.240024 & & \\
8 & 1.517814 & 18 & 2.970980 & 28 & 3.755414 & 38 & 4.106684 & 48 & 4.246500 & & \\
9 & 1.731797 & 19 & 3.102391 & 29 & 3.817255 & 39 & 4.131525 & 49 & 4.256156 & & \\
10 & 1.862340 & 20 & 3.177471 & 30 & 3.851635 & 40 & 4.145897 & 50 & 4.261649 & & \\ \bottomrule
\end{tabular}}
\caption{Partial sums of $ \sum_{w\text{ stressed}} \varphi^{-g(w)}$ considering all stressed words of length at most $\ell$. This table is largely equivalent to Table 2 in \cite{Zhao2010}, by \cref{rmk:scorr}. All results with $\ell > 46$ are new.} \label{tab:smalltable}
\end{table}
\begin{table}[tbp] \centering {\tiny
\begin{tabular}{crrr@{\hspace{1cm}}crrr}  \toprule
\footnotesize $g$ & \footnotesize $s_g$ & \footnotesize $t_g$ & \footnotesize $\hat n_g$ & \footnotesize $g$ & \footnotesize $s_g$ & \footnotesize $t_g$ & \footnotesize $\hat n_g$ \\ \midrule
0 & 0 & 1 & 0 & 48 & 398937594 & 35227607540 & 3032888834 \\
1 & 0 & 1 & 0 & 49 & 620308837 & 57443335681 & 4756701071 \\
2 & 0 & 2 & 0 & 50 & 964299016 & 93635242237 & 7455057891 \\
3 & 1 & 4 & 0 & 51 & 1498722966 & 152577300884 & 11675899900 \\
4 & 0 & 6 & 1 & 52 & 2328886172 & 248541429293 & 18273725810 \\
5 & 1 & 11 & 1 & 53 & 3618215600 & 404736945777 & 28580512964 \\
6 & 3 & 20 & 3 & 54 & 5619924806 & 658898299876 & 44671692245 \\
7 & 2 & 33 & 6 & 55 & 8725957048 & 1072361202701 & 69779534158 \\
8 & 4 & 57 & 10 & 56 & 13542732051 & 1744802234628 & 108935597479 \\
9 & 9 & 99 & 19 & 57 & 21008277551 & 2838171714880 & 169969266940 \\
10 & 12 & 168 & 36 & 58 & 32573278946 & 4615547228454 & 265059561556 \\
11 & 20 & 287 & 56 & 59 & 50480678072 & 7504199621406 & 413144466289 \\
12 & 32 & 487 & 105 & 60 & 78197851828 & 12197944701688 & 643658549663 \\
13 & 50 & 824 & 177 & 61 & 121086932116 & 19823231255210 & 1002326746843 \\
14 & 84 & 1395 & 298 & 62 & 187445618110 & 32208621575008 & 1560141961678 \\
15 & 132 & 2351 & 506 & 63 & 290118087627 & 52321970917845 & 2427273997885 \\
16 & 208 & 3954 & 852 & 64 & 448979969989 & 84979572462842 & 3774618610486 \\
17 & 331 & 6636 & 1409 & 65 & 694763898132 & 137996307278819 & 5867177646731 \\
18 & 526 & 11116 & 2351 & 66 & 1074945125010 & 224050824866671 & 9115752259043 \\
19 & 841 & 18593 & 3871 & 67 & 1662803652299 & 363709935797789 & 14156971708484 \\
20 & 1333 & 31042 & 6354 & 68 & 2571392093291 & 590332152757751 & 21977155500049 \\
21 & 2145 & 51780 & 10414 & 69 & 3975074031374 & 958017162586914 & 34103955827937 \\
22 & 3401 & 86223 & 17023 & 70 & 6142744228748 & 1554492059573413 & 52902754596745 \\
23 & 5314 & 143317 & 27646 & 71 & 9489097657132 & 2521998319817459 & 82034862865123 \\
24 & 8396 & 237936 & 44892 & 72 & 14653792397521 & 4091144171788393 & 127165544752421 \\
25 & 13279 & 394532 & 72692 & 73 & 22623678356496 & 6635766169962348 & \thead{c}{---} \\
26 & 20952 & 653420 & 117412 & 74 & 34921396940988 & 10761831738691729 & \thead{c}{---} \\
27 & 33029 & 1080981 & 189286 & 75 & 53896060190628 & 17451493968844705 & \thead{c}{---} \\
28 & 51927 & 1786328 & 304702 & 76 & 83170721269779 & 28296496428806213 & \thead{c}{---} \\
29 & 81527 & 2948836 & 489003 & 77 & 128330590081463 & 45876320987732381 & \thead{c}{---} \\
30 & 128102 & 4863266 & 783507 & 78 & 197980950188515 & 74370798366727109 & \thead{c}{---} \\
31 & 201700 & 8013802 & 1252986 & 79 & 305373013675616 & 120552492368135106 & \thead{c}{---} \\
32 & 317461 & 13194529 & 2000541 & 80 & 470903017814500 & 195394193752676715 & \thead{c}{---} \\
33 & 498911 & 21707242 & 3188964 & 81 & 725955935033818 & 316672642055845639 & \thead{c}{---} \\
34 & 782868 & 35684639 & 5076448 & 82 & 1118822820960919 & 513185658629483273 & \thead{c}{---} \\
35 & 1226255 & 58618136 & 8069065 & 83 & 1723818676661774 & 831582119361990686 & \thead{c}{---} \\
36 & 1919070 & 96221845 & 12810655 & 84 & 2655337007585162 & 1347423114999059121 & \thead{c}{---} \\
37 & 3000905 & 157840886 & 20317403 & 85 & 4089504559715309 & 2183094738920765116 & \thead{c}{---} \\
38 & 4687213 & 258749944 & 32189863 & 86 & 6297510121544894 & 3536815364041369131 & \thead{c}{---} \\
39 & 7315975 & 423906805 & 50944640 & 87 & 9696835494675713 & 5729606938456809960 & \thead{c}{---} \\
40 & 11419861 & 694076610 & 80537674 & 88 & 14929924846668923 & 9281352227344848014 & \thead{c}{---} \\
41 & 17833383 & 1135816798 & 127176042 & 89 & 22984927653735759 & 15033944093455393733 & \thead{c}{---} \\
42 & 27857264 & 1857750672 & 200605850 & 90 & 35381018243394036 & 24350677339043635783 & \thead{c}{---} \\
43 & 43511423 & 3037078893 & 316112953 & 91 & 54452928150272573 & 39439074360649302089 & \thead{c}{---} \\
44 & 67908811 & 4962738376 & 497663200 & 92 & 83788019525979642 & 63873539719218917514 & \thead{c}{---} \\
45 & 105857661 & 8105674930 & 782811886 & 93 & 128897984242944822 & 103441512064111164425 & \thead{c}{---} \\
46 & 164837336 & 13233250642 & 1230383006 & 94 & 198251405524002501 & 167513303188854084440 & \thead{c}{---} \\
47 & 256493732 & 21595419304 & 1932426198 & 95 & 304861061425644202 & 271259676314390893067 & \thead{c}{---} \\ \bottomrule
\end{tabular}}
\caption{Known values for $s_g$, $t_g$, and $\hat n_g$. All results with $g > 65$ are new.} \label{tab:bigtable}
\end{table}
Our first result is a new lower bound of the quantity $S$. \Cref{tab:smalltable} shows the partial sums of the sum  $\sum_{w\text{ stressed}} \varphi^{-g(w)}$, considering all stressed words of length at most $\ell$. If we use the $\ell = 56$ partial sum as a lower bound, we obtain the following bound:
\begin{prop}
$S > 3.8293$.
\end{prop}
A crude extrapolation suggests that the true value of $S$ lies between $3.85$ and $3.86$.

We will note, due to \cref{rmk:scorr}, that the contents of \cref{tab:smalltable} are largely equivalent to Table 2 in \cite{Zhao2010}, which computes this sum up to $\ell = 46$, concluding that $S > 3.78$. As such, our computations yield $10$ additional terms.

A larger improvement on existing knowledge is obtained by the calculation of $s_g$ for all $g \leq 95$ (see \cref{tab:bigtable}). Using the fact that $s_g = t_g - t_{g-1} - t_{g-2}$, we are then able to deduce the value of $t_g$ for all $g \leq 95$. To our best knowledge, the highest $g$ for which $t_g$ had been previously calculated is $g = 65$ \cite{Eliahou2020}, so this computation adds $30$ previously unknown terms to the sequence.

Combining these results for $t_g$ with known values of $n_g$ for $g \leq 72$ \cite[\href{https://oeis.org/A007323}{A007323}]{BrasAmoros2020,oeis} yields values for $\hat n_g$ for $g \leq 72$. While not attempted in this paper, it may be possible to construct algorithms for evaluating $\hat n_g$ that are significantly faster than algorithms for computing $n_g$, considering that $\lim_{g \to \infty} \hat n_g/n_g = 0$. If this is possible, then new values for $\hat n_g$ may be combined with values of $t_g$ computed in this paper to yield further terms of the sequence $n_g$.

\subsection{Conjectural asymptotics}
\begin{figure}[tbp]\centering
\includegraphics{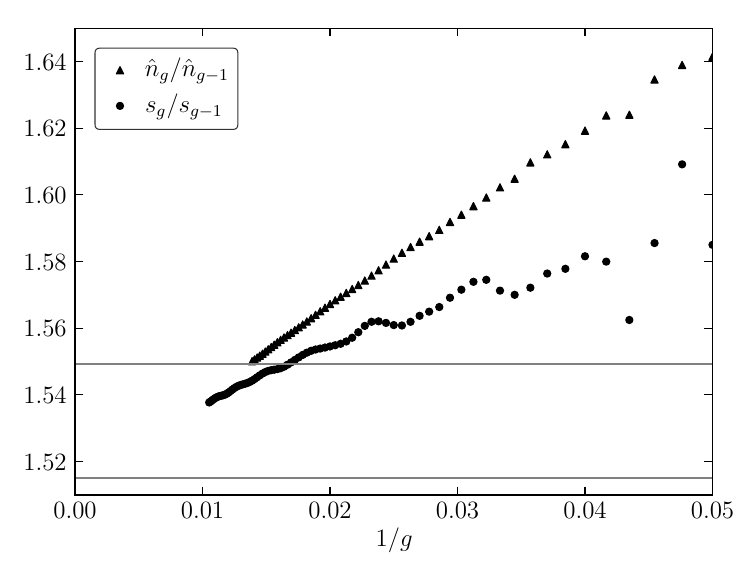}
\caption{A plot of the ratios $\hat n_g/\hat n_{g-1}$ and $s_g/s_{g-1}$ against $1/g$. The horizontal lines are located at $\sfr_{1.51}$ and $\sfr_{1.54}$.} \label{fig:1}
\end{figure}
\Cref{thm:mainbound} bounds the growth rates $r_1$ and $r_2$ within an interval with endpoints $\sfr_{1.51}$ and $\sfr_{1.54}$, but does not determine them exactly. To analyze the numerical evidence, in \cref{fig:1} we plot the ratios $\hat n_g/\hat n_{g-1}$ and $s_g/s_{g-1}$ against $1/g$. We use $1/g$ instead of the more traditional $g$ in order to better visualize the process $g \to \infty$. A extrapolation of the data suggests not only that $r_1 = r_2 = \sfr_{1.51}$, but that $\lim_{g \to \infty} \hat n_g/\hat n_{g-1} = \lim_{g \to \infty} s_g/s_{g-1} = \sfr_{1.51}$. As mentioned in \cref{sec:upper}, all that would be needed to show that $r_1 = r_2 = \sfr_{1.51}$ would be an improvement of the arguments in \cref{subsec:caseiv}, whereas proving $\lim_{g \to \infty} \hat n_g/\hat n_{g-1} = \lim_{g \to \infty} s_g/s_{g-1} = \sfr_{1.51}$ would require additional techniques beyond analyzing the convergence of generating functions.

On the other hand, it appears unlikely that $s_g$ and $\hat n_g$ are asymptotic to exponential functions, since the ratios $\hat n_g/\hat n_{g-1}$ and $s_g/s_{g-1}$ appear to have a nonzero slope in terms of $1/g$; in other words, there appear to be constants $\alpha$ and $\alpha'$ (possibly equal) such that $\hat n_g/\hat n_{g-1} = \sfr_{1.51}(1 + \alpha/g + o(1/g))$ and $s_g/s_{g-1} = \sfr_{1.51}(1 + \alpha'/g + o(1/g))$. This suggests that $\hat n_g/\sfr_{1.51}^g$ and $s_g/\sfr_{1.51}^g$ grow polynomially in $g$ with exponents $\alpha$ and $\alpha'$. In summary, we may condense our above discussion into the following three successively stronger conjectures, where each implies the previous:
\begin{conj} \label{conj:growthconj}
\begin{parts}
\item $\lim_{g \to \infty} \hat n_g^{1/g} = \lim_{g\to\infty} s_g^{1/g} = \sfr_{1.51}$.
\item $\lim_{g \to \infty} \hat n_g/\hat n_{g-1} = \lim_{g \to \infty} s_g/s_{g-1} = \sfr_{1.51}$.
\item There exist constants $\alpha$ and $\alpha'$ (possibly equal) such that $\hat n_g = g^{\alpha + o(1)} \sfr_{1.51}^g$ and $s_g = g^{\alpha' + o(1)} \sfr_{1.51}^g$.
\end{parts}
\end{conj}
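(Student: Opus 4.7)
The plan is to close the gap in \cref{thm:mainbound} by sharpening \cref{sec:upper} to show $r_2 \leq \sfr_{1.51}$, then promote part (a) to (b) and (c) using analytic combinatorics. Inspecting the five cases of the main proof, Cases I, II, III, and V each already decay at a rate strictly better than $\sfr\inv_{1.51}$ (for instance Case III is governed by a factor near $0.9$), so the sole obstruction is Case IV ($q' = 3$), in which the Zhao-type estimate produces the polynomial $x^4(x+1)^2(x^4+2x^3+x^2+1)$ whose positive root is $\sfr_{1.54}$. The objective is therefore to replace that polynomial by $x^3(x+1)(x^2+x+1)$, the one controlling the lower bound in \cref{sec:lower}.

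The reason the Case IV bound is loose is that \cref{thm:zhao} treats all positions symmetrically, whereas the lower-bound family $\frac{x^3}{1-x^3(x+1)(x^2+x+1)}$ exploits an asymmetry: in the optimizing stressed words, the digit $1$ only ever appears in the second half. I would therefore introduce a position-aware refinement, fixing a parameter $\lambda \in (0,1)$ and partitioning $[\ell]$ into a prefix $[\lfloor \lambda \ell\rfloor]$ and its complement. The Kunz relations force that if $w_i = w_j = 1$ with $i + j$ lying in the prefix then $w_{i+j} \leq 2$; combined with the existence of some $w_p \geq 4$, this should preclude the prefix from housing many $1$'s while also supporting the depth-achieving entry. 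I would then apply \cref{cor:zhaocor} separately to prefix and suffix, with the effectively smaller alphabet $\set{2,3\}}$ on the prefix and $\set{1,2,3}$ on the suffix, hoping the two contributions multiply to give a bound controlled by $x^3(x+1)(x^2+x+1)$. Once (a) is in hand, the convolution identity arising from \cref{prop:3kdecomp,cor:tgdecomp} should let one apply Flajolet--Sedgewick singularity analysis to $\sum_g s_g x^g$ at its dominant singularity $x = 1/\sfr_{1.51}$, yielding both $s_g/s_{g-1} \to \sfr_{1.51}$ (part (b)) and $s_g = g^{\alpha' + o(1)} \sfr_{1.51}^g$ (part (c)) for an explicit exponent depending on the analytic type of the singularity, and similarly for $\hat n_g$.

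The principal obstacle is the position-aware refinement itself. Zhao's inequality saturates on $K_{k,k}$, and the additive constraint graphs arising from Kunz conditions do not embed complete bipartite subgraphs in a way that remains tight once one injects positional information. A rigorous position-aware analogue of \cref{thm:zhao}---or an entirely different method such as a transfer-matrix analysis on a state space tracking the running minima $\min_{j \leq i}(w_j + w_{i-j})$ as $i$ sweeps across the word---seems to be required. Whether such a transfer operator can be shown to have spectral radius exactly $\sfr_{1.51}$, rather than something strictly larger, is the substantive open question, and it is by no means clear that the $K_{k,k}$-style graph homomorphism framework is even the right tool for the job at this precision.
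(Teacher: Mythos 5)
This statement is a \emph{conjecture} in the paper, not a theorem: the paper does not prove it, and supports it only by the numerical evidence in \cref{tab:bigtable} and the extrapolations plotted in \cref{fig:1,fig:2}. Your text is a speculative research program rather than a proof, and you acknowledge as much; the comparison I can offer is therefore between your sketch and the paper's discussion, and on the plausibility of the sketch itself.

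Two concrete issues. First, the claim that Case IV is the sole obstruction to pushing the threshold from $\sfr\inv_{1.54}$ to $\sfr\inv_{1.51}$ is not right. In Case V, \cref{lem:q4break} with $q' = 4$ feeds into $\calw^\circ_3$, and \cref{lem:cleanbound} requires $h_3(x) < 1$ where $h_3(x) = x^3(x+1)(x^2+x+1)$; that quantity equals $1$ exactly at $x = \sfr\inv_{1.51}$. So even a perfect repair of Case IV would leave Case V tight at the conjectured threshold rather than strictly inside it, and the polynomial-in-$\ell$ prefactors (the $\ell^{10k^2+k+2}1000^{\cdots}$ terms) would no longer be absorbed by a geometric margin. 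This is exactly the kind of boundary behavior that the paper's method, which only aims at a strict radius-of-convergence inequality, is not designed to handle.

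Second, and independently of the upper bound, your jump from (a) to (b) and (c) is a non sequitur. Part (a) only determines the radius of convergence of $\sum_g s_g x^g$. Flajolet--Sedgewick singularity analysis requires knowing the \emph{nature} of the dominant singularity and analytic continuation past it (typically to a $\Delta$-domain); a bound $\limsup s_g^{1/g} = \sfr_{1.51}$ gives none of that, and the generating function here has no closed form from which to read off the singularity type. The paper's own phrasing --- ``three successively stronger conjectures, where each implies the next'' --- goes (c) $\Rightarrow$ (b) $\Rightarrow$ (a), not the other way, precisely because (a) is strictly weaker. Moreover, \cref{cor:tgdecomp} relates $t_g$ to $s_g$, not $n_g$ to $\hat n_g$, so even with a full singular expansion for $\sum s_g x^g$ you would need a separate argument to transfer to $\hat n_g$. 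The heart of the conjecture is genuinely open, and the honest assessment in your last paragraph --- that the $K_{k,k}$ graph-homomorphism framework may be the wrong tool at this precision --- is the soundest part of the proposal.
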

Computational evidence suggests that if $\alpha$ and $\alpha'$ exist, then they are both between $1.6$ and $1.7$.

\subsection{Remarks on \texorpdfstring{\cref{conj:bafib}}{Conjecture \ref{conj:bafib}}}
\begin{figure}[tbp] \centering
\includegraphics[width=5in]{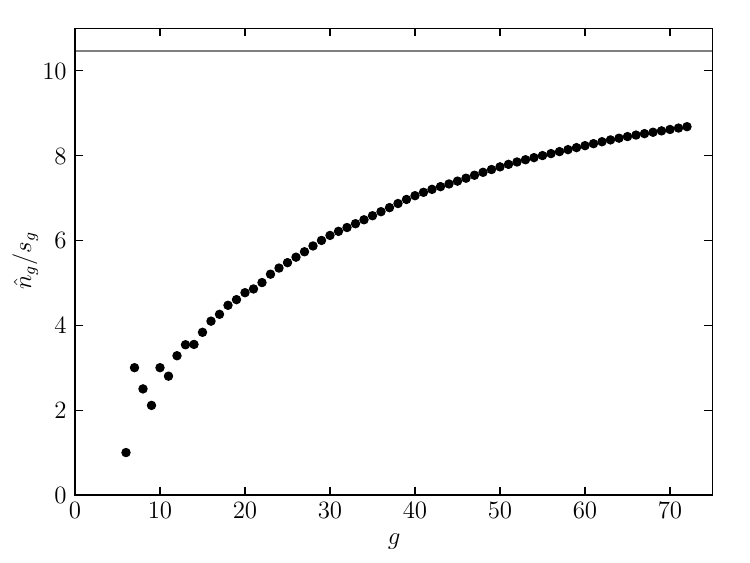}
\caption{A plot of $\hat n_g/s_g$. The horizontal line is located at $(\sfr_{1.51}^{-1} + \sfr_{1.51}^{-2}-1)\inv \approx 10.465$.} \label{fig:2}
\end{figure}
We conclude this paper by discussing \cref{conj:bafib} in light of our results, which may be written as the statement that $s_g + \hat n_g - \hat n_{g-1} - \hat n_{g-2} \geq 0$. While our results bound $\abs{s_g + \hat n_g - \hat n_{g-1} - \hat n_{g-2}} < (\sfr_{1.54} + o(1))^g$, they do not determine its sign. Nevertheless, this bound indicates that the validity of the conjecture is in a sense determined by phenomena that lie ``underneath'' the prevailing Fibonacci behavior of $n_g$.

Our work with $s_g$ and $\hat n_g$ also hints at a possible method for proving or disproving \cref{conj:bafib}. The first step lies in proving part (b) of \cref{conj:growthconj}, which will imply that $\hat n_{g-1} + \hat n_{g-2} - \hat n_g \sim (\sfr_{1.51}^{-1} + \sfr_{1.51}^{-2} - 1) \hat n_g$. Assuming this result, the truth of \cref{conj:bafib} will then be highly dependent on the ratio $\hat n_g/s_g$, which seems to approach some real constant $A$ as $g$ goes to infinity (see \cref{fig:2}). In particular, assuming that $A$ exists, we then find that
\[s_g + \hat n_g - \hat n_{g-1} - \hat n_{g-2} = (A\inv - (\sfr_{1.51}^{-1} + \sfr_{1.51}^{-2} - 1) + o(1)) \hat n_g.\]
Thus, if $A > (\sfr_{1.51}^{-1} + \sfr_{1.51}^{-2} - 1)\inv \approx 10.465$, \cref{conj:bafib} is false for sufficiently large $g$, and if $A < (\sfr_{1.51}^{-1} + \sfr_{1.51}^{-2} - 1)\inv$, then \cref{conj:bafib} is true for sufficiently large $g$. The most intriguing possibility would be if we had $A = (\sfr_{1.51}^{-1} + \sfr_{1.51}^{-2} - 1)\inv$, which would give no information as to whether \cref{conj:bafib} is true. If this were the case, it would strongly suggest a deep connection between the numbers $\hat n_g$ and $\hat n_{g-1} + \hat n_{g-2}$ in spite of the fact that the growth rate of $\hat n_g$ is less than $\varphi$. Such a connection could take the form of a novel operation on numerical semigroups.

\section*{Acknowledgments}
This work was conducted in large part at the REU at the University of Minnesota Duluth, funded by NSF grant DMS-1949884 and NSA grant H98230-20-1-0009. The author would like to thank Joseph Gallian for organizing the REU and providing frequent feedback and support. The author would also like to thank Amanda Burcroff, Mehtaab Sawhney, Cynthia Stoner, Yufei Zhao, and the anonymous referees for helpful discussions and comments on the manuscript.

\printbibliography
\end{document}